\DeclareMathOperator{\im}{im}
\DeclareMathOperator{\ind}{ind}
\DeclareMathOperator{\Int}{int}
\newcommand{\D}{\mathbf{D}}
\begin{document}
\title{Splitting symplectic fillings}
\author{Austin Christian and Michael Menke}
\begin{abstract}
We generalize the mixed tori which appear in the second author's JSJ-type decomposition theorem for symplectic fillings of contact manifolds.  Mixed tori are convex surfaces in contact manifolds which may be used to decompose symplectic fillings.  We call our more general surfaces splitting surfaces, and show that the decomposition of symplectic fillings continues to hold.  Specifically, given a strong or exact symplectic filling of a contact manifold which admits a splitting surface, we produce a new symplectic manifold which strongly or exactly fills its boundary, and which is related to the original filling by Liouville surgery.
\end{abstract}
\maketitle

\section{Introduction}
Contact geometry is a close relative of symplectic geometry, and one manifestation of this relationship is the tendency for symplectic manifolds-with-boundary to endow their boundaries with contact structures.  For instance, suppose $(W,\omega)$ is a compact symplectic manifold which admits a Liouville vector field near its boundary.  That is, there is a vector field $Z$ on $W$ pointing out of $\partial W$ with the property that $\mathcal{L}_Z\omega=\omega$ in some neighborhood of $M=\partial W$.  Then $M$ inherits an orientation from $W$ and $\lambda:=\iota_Z\omega$ determines a co-oriented contact structure $\xi:=\ker(\lambda|_M)$ on $M$.  In this case say that $(W,\omega)$ is a \emph{strong symplectic filling} of the contact manifold $(M,\xi)$.\\

It is natural to wonder about the extent to which this construction is reversible.  That is, we begin with a fixed contact manifold $(M,\xi)$ and ask existence and uniqueness questions about the strong symplectic fillings of this manifold.  Eliashberg and Gromov showed in \cite{eliashberg1991convex} that a fillable contact manifold must be tight, so the overtwisted contact manifolds immediately give a large class of manifolds which are not symplectically fillable.  In \cite{etnyre2002tight} Etnyre and Honda showed that while tightness is necessary for fillability, it is not sufficient.  Another early result, due to Eliashberg (\cite{eliashberg1990filling}) and Gromov (\cite{gromov1985pseudo}), says that symplectic fillings of the standard 3-sphere $(S^3,\xi_{std})$ are unique up to symplectic deformation equivalence and blowup.  If we further require the filling to be \emph{exact}, meaning that $\mathcal{L}_Z\omega=\omega$ on all of $W$, then $(S^3,\xi_{std})$ in fact has a unique filling up to symplectomorphism.\\

A number of contact 3-manifolds have seen their exact fillings classified up to symplectomorphism, symplectic deformation equivalence, or diffeomorphism.  Wendl showed in \cite{wendl2010strongly} that $(\mathbb{T}^3,\xi_1)$ has a unique exact filling up to symplectomorphism, where $\xi_1$ is the canonical contact structure on $ST^*\mathbb{T}^2$, and work of McDuff (\cite{mcduff1990structure}) and Lisca (\cite{lisca2008symplectic}) classified the exact fillings of lens spaces $(L(p,q),\xi_{std})$ up to diffeomorphism.  Some classification results also exist for higher-dimensional contact manifolds, but giving precise symplecto-geometric descriptions of higher-dimensional fillings is difficult.  The most famous result in high dimensions is probably the Eliashberg-Floer-McDuff theorem (\cite{mcduff1991symplectic}), which says that, up to diffeomorphism, $(S^{2n-1},\xi_{std})$ has a unique symplectically aspherical strong symplectic filling, for all $n\geq 3$.\\

In \cite{menke2018jsj}, the second author introduced the notion of a \emph{mixed torus} --- a special kind of convex torus --- in a contact 3-manifold, and showed that if $(M,\xi)$ admits a mixed torus, then we may construct from any strong symplectic filling $(W,\omega)$ of $(M,\xi)$ another symplectic manifold $(W',\omega')$ which strongly fills its boundary $(M',\xi')$.  Moreover, the relationship between $(W,\omega)$ and $(W',\omega')$ may be stated rather explicitly, with $(W,\omega)$ obtained from $(W',\omega')$ by Liouville surgery in a prescribed manner.  This allows us to leverage an understanding of the fillings of $(M',\xi')$ into information about the fillings of $(M,\xi)$.\\

In this note we consider higher-genus analogues of mixed tori, which we call \emph{splitting surfaces}.  We will give a precise definition of splitting surfaces in Section \ref{sec:background}, but a splitting surface of genus 1 is simply a mixed torus.  The purpose of this note is to show that the main theorem of \cite{menke2018jsj} continues to hold in any genus.

\begin{theorem}\label{main-theorem}
Let $(M,\xi)$ be a closed, co-oriented 3-dimensional contact manifold and let $(W,\omega)$ be a strong (respectively, exact) filling of $(M,\xi)$.  If $(M,\xi)$ admits a splitting surface $\Sigma$ of genus $g$, then there exists a symplectic manifold $(W',\omega')$ such that
\begin{enumerate}
	\item $(W',\omega')$ is a strong (respectively, exact) filling of its boundary $(M',\xi')$;
	
	\item there are Legendrian graphs $\Lambda_1,\Lambda_2\subset\partial W'$ with standard neighborhoods $N(\Lambda_1),N(\Lambda_2)$ such that
	\[
	M \simeq \left(\partial W' - \bigcup_{i=1}^{2}\Int(N(\Lambda_i))\right)/(\partial N(\Lambda_1)\sim \partial N(\Lambda_2)),
	\]
	where the boundaries $\partial N(\Lambda_i)$ are glued in such a way that their dividing sets and meridians are identified;\label{main-thm:decomp}
	
	\item $(W,\omega)$ can be recovered from $(W',\omega')$ by attaching a symplectic handle $(H_{R_+(\Sigma)},\omega_\beta)$ constructed from the positive region of $\Sigma$.
\end{enumerate}
\end{theorem}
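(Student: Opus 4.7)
The plan is to follow the blueprint laid out in \cite{menke2018jsj} for the mixed torus case, replacing the holomorphic curve arguments adapted to a convex torus with analogous arguments for a convex surface of arbitrary genus. The first step is to normalize the data near $\Sigma$: by the splitting surface hypothesis, $\Sigma$ is convex with a prescribed dividing set $\Gamma_\Sigma$ that decomposes $\Sigma$ into $R_+(\Sigma)$ and $R_-(\Sigma)$. I would choose a contact vector field transverse to $\Sigma$ and use Giroux's realization lemma to arrange that the characteristic foliation is Morse-Smale with respect to $\Gamma_\Sigma$; this gives an explicit model for a neighborhood of $\Sigma$ as an $I$-invariant contact piece built from the positive region, together with explicit Legendrian graphs $\Lambda_1,\Lambda_2$ whose standard neighborhoods play the role of the two solid tori that sit on either side of a mixed torus.

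The second and central step is to pass to the symplectic completion $(\hat{W},\hat{\omega})$ and carry out a neck-stretching argument along $\Sigma$. One chooses an SFT-admissible almost-complex structure $J$ that is cylindrical near $\partial W$ and stretched along a convex-surface neighborhood of $\Sigma$, and considers a moduli space of finite-energy $J$-holomorphic curves whose asymptotic and boundary behavior is dictated by the dividing set: in genus one these are the holomorphic annuli/foliating disks used by Menke, and in higher genus the natural replacement is a family of holomorphic curves whose boundaries sweep out $\Gamma_\Sigma$ and whose projections to $\Sigma$ cover $R_+(\Sigma)$. Using positivity of intersections with the stretched cylinders and a Bishop family to seed the moduli space, I would show that this moduli space is non-empty, compact after SFT compactification, and that transversality can be achieved by a generic choice of $J$ in the stretched region where needed.

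The third step is to upgrade the moduli space to a foliation. The expected output is that the union of the curves in the moduli space, together with their limits, covers a compact region $H\subset\hat{W}$ that is identified with the model symplectic handle $(H_{R_+(\Sigma)},\omega_\beta)$ built from the positive region; the two components of $\partial H\setminus \Sigma$ are precisely the standard neighborhoods $N(\Lambda_1),N(\Lambda_2)$. Removing $H$ from $\hat{W}$ and truncating at an appropriate contact-type hypersurface yields the manifold $W'$; conclusion (\ref{main-thm:decomp}) of the theorem is then essentially a bookkeeping exercise, identifying boundary pieces and checking that dividing sets and meridians of $N(\Lambda_1),N(\Lambda_2)$ are glued correctly along $\Sigma$. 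The fact that $W'$ is a strong (resp.\ exact) filling follows from the fact that $\omega$ (resp.\ the primitive $\lambda$) restricts to the complement of $H$, and that the new boundary component inherits a Liouville vector field from the model.

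The main obstacle, and the one deserving the most care, is the holomorphic curve theory in step two: in the mixed-torus case the relevant moduli space is low-dimensional and essentially forced by writing down an explicit Bishop family of disks, whereas for a genus $g$ splitting surface the foliating curves have richer topology and must be shown to exist, to achieve transversality, and to compactify without troublesome bubbling (in particular without escaping into the concave end of $\hat{W}$). Controlling this will likely require a careful choice of totally real or Lagrangian boundary condition modeled on $R_+(\Sigma)$, combined with an energy/area bound coming from the contact structure on $\Sigma$, so that the moduli space remains a smoothly foliating family in the limit of full stretching. Once this analytic input is in place, the remaining arguments reduce to the genus-one case of \cite{menke2018jsj} applied piecewise along the dividing set.
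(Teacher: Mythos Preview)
The proposal has a genuine gap: it makes no use of the bypasses $D_i^\pm$, which are the defining feature of a splitting surface beyond the planarity of $R_\pm(\Sigma)$. In the paper's proof the bypasses are essential. After arranging the dividing curves to be elliptic Reeb orbits $e_1,\ldots,e_{g+1}$, one attaches each bypass as a pair of contact $1$- and $2$-handles; this introduces hyperbolic orbits $h^i_{g+1},\tilde h_i$ (and primed counterparts from the bypasses on the other side) with controlled action and Conley--Zehnder index. These orbits, together with the ``wall'' curves of Lemma~\ref{lemma:walls} living in the handle neighborhoods, are what pin down the boundary of $\overline{\mathcal{M}(e_1,\ldots,e_{g+1})/\mathbb{R}}$ in Lemma~\ref{lemma:boundary-curves} and force the compactification of $\mathcal{M}_{\widehat W}(e_1,\ldots,e_g,h^1_{g+1})$ to be an interval with exactly the two ends $v_{0,\pm}$ in Lemma~\ref{lemma:middle-interval}. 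Without the bypasses you have no hyperbolic orbits to catch degenerations, no walls against which to invoke positivity of intersections, and nothing preventing the family from breaking on uncontrolled Reeb orbits elsewhere in $M$. Your appeal to a generic ``energy/area bound'' is not a substitute: the action inequalities in Lemma~\ref{lemma:description-of-orbits}\eqref{part:action-bound} are engineered precisely by the handle attachments.

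A secondary issue is the formulation of the curve problem. You describe curves with boundary on a totally real or Lagrangian lift of $R_+(\Sigma)$, seeded by a Bishop family. The paper instead works with \emph{punctured} spheres in $\widehat W$, positively asymptotic to the elliptic orbits $e_1,\ldots,e_{g+1}$; the seeds $u_\pm$ are obtained by lifting $R_\pm(\Sigma_g)$ via Lemma~\ref{lemma:weinstein-domains} and gluing half-cylinders near the punctures (Lemma~\ref{lemma:lifts}). This is not cosmetic: the SFT compactification for punctured curves is what produces the two-level buildings analyzed above, and it is the asymptotic orbits---not Lagrangian boundary conditions---that interact with the bypass handles. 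If you pursue the boundary-value formulation you will need a different compactness argument, and it is unclear how the bypass hypothesis would enter at all.
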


The first use of mixed tori to classify symplectic fillings came in the form of \cite[Theorem 1.2]{menke2018jsj}, where it is shown that if $(M,\xi)$ is obtained from $(M_0,\xi_0)$ by Legendrian surgery along a Legendrian knot which has been stabilized both positively and negatively, then every exact filling of $(M,\xi)$ is obtained from an exact filling of $(M_0,\xi_0)$ by attaching a round symplectic 1-handle along the Legendrian knot.  In particular, this means that contact manifolds obtained from $(S^3,\xi_{std})$ by Legendrian surgery along twice-stabilized Legendrian knots have unique exact fillings.  The following is then obtained by repeatedly applying \cite[Theorem 1.2]{menke2018jsj}:

\begin{corollary}\label{corollary:links}
Let $\Lambda\subset(S^3,\xi_{std})$ be a linear chain of Legendrian unknots, so that Legendrian surgery along $\Lambda$ produces a tight lens space.  If each unknot has been stabilized both positively and negatively, then this lens space admits a unique exact filling up to symplectomorphism.
\end{corollary}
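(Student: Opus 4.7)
The plan is to proceed by induction on the number of components $n$ of the chain $\Lambda = \Lambda_1 \cup \cdots \cup \Lambda_n$, with the base case $n = 0$ supplied by the Eliashberg--Gromov uniqueness theorem for $(S^3,\xi_{std})$ recalled in the introduction, and the inductive step supplied by \cite[Theorem 1.2]{menke2018jsj}. It is convenient to prove the slightly stronger statement: for any linear chain of Legendrian unknots in $(S^3,\xi_{std})$ each stabilized both positively and negatively, the contact manifold resulting from Legendrian surgery admits a unique exact filling up to symplectomorphism. For $0 \leq k \leq n$, let $(L_k,\xi_k)$ denote the contact manifold obtained from $(S^3,\xi_{std})$ by Legendrian surgery along the sub-chain $\Lambda_1 \cup \cdots \cup \Lambda_k$, so that $(L_0,\xi_0) = (S^3,\xi_{std})$, $(L_n,\xi_n)$ is the target lens space, and each $(L_k,\xi_k)$ is Weinstein-fillable (hence tight) by construction.

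For the inductive step, I would present $(L_n,\xi_n)$ as the result of Legendrian surgery on $(L_{n-1},\xi_{n-1})$ along $\Lambda_n$, now regarded as a Legendrian knot inside $L_{n-1}$. Granting that $\Lambda_n$ still admits both a positive and a negative stabilization in this new ambient manifold, \cite[Theorem 1.2]{menke2018jsj} implies that every exact filling of $(L_n,\xi_n)$ arises from some exact filling of $(L_{n-1},\xi_{n-1})$ by attaching a single round symplectic 1-handle along $\Lambda_n$. The inductive hypothesis --- applied to the sub-chain $\Lambda_1 \cup \cdots \cup \Lambda_{n-1}$, which is again a linear chain of Legendrian unknots each stabilized both positively and negatively --- forces the exact filling of $(L_{n-1},\xi_{n-1})$ to be unique up to symplectomorphism. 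Since the round 1-handle attachment is itself determined up to symplectomorphism by the Legendrian isotopy class of $\Lambda_n$ together with its framing, this yields a unique exact filling of $(L_n,\xi_n)$, completing the induction.

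The main obstacle is the persistence-of-stabilization claim invoked in the previous paragraph: one must verify that positive and negative stabilizations of $\Lambda_n$ in $(S^3,\xi_{std})$ remain stabilizations of $\Lambda_n$ as a Legendrian knot in $(L_{n-1},\xi_{n-1})$, after Legendrian surgery on the remaining components. Because $\Lambda_n$ is geometrically linked only with $\Lambda_{n-1}$ in the linear chain, and because stabilization is a purely local modification supported in an arbitrarily small arc, one can arrange the positive and negative stabilizing cusps of $\Lambda_n$ to lie in arcs disjoint from a standard Legendrian neighborhood of every other $\Lambda_i$. These cusps then survive the Legendrian surgeries on $\Lambda_1, \ldots, \Lambda_{n-1}$ and yield the required stabilizations of $\Lambda_n \subset (L_{n-1},\xi_{n-1})$. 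Making this front-projection argument fully rigorous, and similarly confirming that the round 1-handle attachment is indeed uniquely determined up to symplectomorphism by the Legendrian isotopy class of the attaching knot, constitutes the only technical content beyond the direct citation of Theorem 1.2.
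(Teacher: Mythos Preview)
Your proposal is correct and matches the paper's approach: the paper does not give a detailed proof of this corollary but simply states that it is ``obtained by repeatedly applying \cite[Theorem 1.2]{menke2018jsj},'' which is precisely the induction you describe. Your additional care in verifying that the stabilizations of $\Lambda_n$ survive the earlier surgeries (a local argument) and that the round 1-handle attachment is determined by the Legendrian data fills in details the paper leaves implicit, but the strategy is identical.
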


We mention this result here because the methods that were used to prove \cite[Theorem 1.2]{menke2018jsj} from \cite[Theorem 1.1]{menke2018jsj} could also be used to prove Corollary \ref{corollary:links} from Theorem \ref{main-theorem}.  An interesting (if vague) question is then the following: let $\Lambda\subset(S^3,\xi_{std})$ be a Legendrian link, and let $(M,\xi)$ be the result of Legendrian surgery along $\Lambda$.  Other than the condition listed in Corollary \ref{corollary:links}, are there topological properties of $\Lambda$ or configurations of stabilizations on its components which force $(M,\xi)$ to admit a splitting surface?  Under what circumstances does this yield a classification of the fillings of $(M,\xi)$?\\

Our strategy of proof for the main theorem follows in the tradition of Eliashberg's "filling by holomorphic disks," initiated in \cite{eliashberg1990filling}.  A splitting surface $\Sigma\subset(M,\xi)$ of genus $g$ gives us two surfaces in $M$ with genus 0 and $g+1$ boundary components, each of which can be lifted to a family of $J$-holomorphic curves in the symplectization of $M$.  If we have a filling $(W,\omega)$ of $(M,\xi)$, these families can be extended to a single 1-dimensional family of $J$-holomorphic curves in the completion $(\widehat{W},\widehat{\omega})$, and the geometric conditions on $\Sigma$ will control the topology of this family.  Removing a neighborhood of this family will lead us to the new symplectic manifold $(W',\omega')$.\\

In Section \ref{sec:background} we recall some useful definitions and results from contact geometry and give a definition of our splitting surfaces.  Section \ref{sec:main-theorem} contains the proof of Theorem \ref{main-theorem}.

\subsection*{Acknowledgements}  The authors would like to thank Ko Honda for a number of helpful conversations and suggestions during the completion of this project.

\section{Background}\label{sec:background}
Throughout this section we fix a closed contact 3-manifold $(M,\xi)$.

\subsection{Fillings of contact manifolds}\label{subsec:fillings}
As mentioned above, many symplectic manifolds endow their boundaries with contact structures, and there are various levels of compatibility between the symplectic and contact structures.  In the other direction, we say that our contact manifold $(M,\xi)$ is \emph{fillable} if it can be realized as the boundary of such a symplectic manifold.  We have the following definitions.

\begin{definition}
Fix a co-oriented contact manifold $(M,\xi)$ and suppose $(W,\omega)$ is a symplectic manifold with $\partial W=M$ as oriented manifolds.  We say that $(W,\omega)$ is
\begin{itemize}
	\item a \emph{weak symplectic filling} of $(M,\xi)$ if $\omega|_\xi>0$;
	\item a \emph{strong symplectic filling} of $(M,\xi)$ if there is a 1-form $\lambda$ on $W$ such that $\omega=d\lambda$ on some neighborhood of $\partial W$ and $\xi=\ker(\lambda|_{\partial W})$;
	\item an \emph{exact filling} of $(M,\xi)$ if there is a 1-form $\lambda$ on $W$ such that $\omega=d\lambda$ on all of $W$ and $\xi=\ker(\lambda|_{\partial W})$.
\end{itemize}
We say that $(M,\xi)$ is \emph{weakly symplectically fillable}, \emph{strongly symplectically fillable}, or \emph{exactly fillable} if it admits a weak symplectic, strong symplectic, or exact fillling, respectively.\\
\end{definition}

Certainly every exact filling is a strong filling and every strong filling is a weak filling, so we have inclusions
\[
\{\text{exactly fillable}\}
\subseteq
\{\text{strongly symplectically fillable}\}
\subseteq
\{\text{weakly symplectically fillable}\}.
\]
One hypothesis of Theorem \ref{main-theorem} is that our contact manifold $(M,\xi)$ admits a strong or exact filling $(W,\omega)$, so our manifolds will always be at least strongly fillable.

\subsection{Convex surfaces}\label{subsec:convex-surfaces}
We quickly recall the notion of convexity in contact topology, as explored by Giroux in \cite{giroux1991convexite}.  First, a \emph{contact vector field} on a contact 3-manifold $(M,\xi)$ is a vector field whose flow preserves $\xi$.  Notice that if $\lambda$ is a contact form for $\xi$ and $X$ is a contact vector field, then
\[
\mathcal{L}_X\lambda = g\lambda
\]
for some positive smooth function $g$, so flowing along $X$ produces conformal dilations of the contact form.  For this reason we say that a surface $\Sigma\subset (M,\xi)$ is \emph{convex} if there is a contact vector field for $(M,\xi)$ which is transverse to $\Sigma$.  An important observation is that convex surfaces exist in abundance.

\begin{theorem}[{\cite{giroux1991convexite}}]
Any closed surface in a contact manifold $(M,\xi)$ is $C^\infty$-close to a convex surface.
\end{theorem}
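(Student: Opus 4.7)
The plan is to work in a collar of $\Sigma$, reduce convexity to a dynamical condition on the characteristic foliation $\Sigma_\xi$, and then achieve this condition by a $C^\infty$-small graphical perturbation. Choose a tubular neighborhood $\Sigma\times(-\epsilon,\epsilon)\subset M$ with coordinate $t$, and a contact form $\alpha=\beta_t+u_t\,dt$, where $\beta_t$ is a $t$-family of 1-forms on $\Sigma$ and $u_t$ a $t$-family of functions. In these coordinates $\Sigma_\xi$ is directed by $\ker\beta_0$ with singularities where $\beta_0=0$, and a direct computation of $\mathcal{L}_{f\partial_t}\alpha$ shows that $f\partial_t$ is a contact vector field transverse to $\Sigma$ precisely when $f>0$ solves a first-order linear equation in $\beta_0$, $\dot\beta_0$, and $u_0$.

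Reinterpreted intrinsically on $\Sigma$, this equation admits a positive solution if and only if $\Sigma_\xi$ admits a \emph{dividing set}: a properly embedded multicurve $\Gamma\subset\Sigma$ transverse to the foliation and cutting $\Sigma$ into pieces $R_+\sqcup R_-$ on which $\Sigma_\xi$ is respectively volume-expanding and volume-contracting with respect to some area form, with the flow crossing $\Gamma$ from $R_-$ into $R_+$. Hence the theorem reduces to producing such a $\Gamma$ after a $C^\infty$-small perturbation of $\Sigma$.

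The perturbation is carried out through graphs $\{t=h(x)\}$ with $h$ small in $C^\infty$, and a Kupka--Smale-type genericity argument arranges that $\Sigma_\xi$ becomes Morse--Smale: singularities are non-degenerate (elliptic or hyperbolic, each carrying a definite sign determined by the contact framing), closed orbits are hyperbolic, and no saddle connection joins two singularities of the same sign. With this in hand, the union $G_-$ of all negative singularities together with their stable and unstable separatrices is a closed graph, and the boundary $\Gamma$ of a sufficiently thin tubular neighborhood of $G_-$ is a multicurve transverse to $\Sigma_\xi$; the sign structure together with the Morse--Smale hypothesis forces the complementary components to be precisely the required $R_\pm$. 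Reversing the calculation of the first paragraph then produces the desired transverse contact vector field on the perturbed surface.

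I expect the main obstacle to be the genericity step: one must arrange non-degeneracy of singularities, hyperbolicity of closed orbits, and the absence of like-signed saddle connections simultaneously through a single $C^\infty$-small graphical perturbation, while keeping track of how the split data $(\beta_t,u_t)$ transforms. This is handled by a transversality argument on appropriate jets of contact forms near $\Sigma$. Once the Morse--Smale form of $\Sigma_\xi$ is in hand, the construction of $\Gamma$ and the passage back to a contact vector field are essentially formal.
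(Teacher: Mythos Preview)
The paper does not prove this theorem; it is quoted as background and attributed to \cite{giroux1991convexite} without argument.  Your outline is the standard Giroux proof: pass to a collar, translate convexity into the existence of a dividing multicurve for the characteristic foliation, make $\Sigma_\xi$ Morse--Smale (in the signed sense) by a $C^\infty$-small graphical perturbation, and then build $\Gamma$ as the boundary of a neighborhood of the negative skeleton.

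One point to tighten: your description of $G_-$ as ``all negative singularities together with their stable and unstable separatrices'' is not quite right.  The unstable separatrices of a negative hyperbolic point terminate at positive sinks (positive elliptic points or repelling closed orbits), so including them would carry $G_-$ into the region that must become $R_+$.  The correct graph consists of the negative singularities, the attracting closed orbits, and the \emph{stable} separatrices of the negative hyperbolic points; the signed no-saddle-connection hypothesis is exactly what guarantees this set is closed.  With that correction the rest of your sketch goes through as in Giroux's paper.
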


If $\Sigma\subset(M,\xi)$ is convex and $X$ is a contact vector field transverse to $\Sigma$, then the \emph{dividing set} of $\Sigma$ is
\[
\Gamma_\Sigma = \{p\in\Sigma~|~X(p)\in\xi_p\}.
\]
Three important observations about the multi-curve $\Gamma_\Sigma$ are
\begin{enumerate}
	\item $\Gamma_\Sigma$ divides $\Sigma$ into positive and negative regions: $\Sigma\setminus\Gamma_\Sigma=R_+(\Sigma)\sqcup R_-(\Sigma)$;
	\item $\Gamma_\Sigma$ is transverse to the characteristic foliation $\Sigma_\xi$ of $\Sigma$;
	\item $\Sigma$ admits a volume form $\omega$ and a vector field $Y$ so that $Y$ points transversely out of $R_+(\Sigma)$ along $\Gamma_\Sigma$, directs $\Sigma_\xi$, and dilates $\omega$ in the sense that $\pm\mathcal{L}_Y\omega>0$ on $R_{\pm}(\Sigma)$.
\end{enumerate}
These three characteristics determine $\Gamma_\Sigma$ up to isotopy, so we will refer to the dividing set $\Gamma_\Sigma$ and the regions $R_{\pm}(\Sigma)$ of a convex surface $\Sigma$ without reference to a particular contact vector field.

\subsection{Bypasses and stabilizations}\label{subsec:bypasses}
If $\Sigma\subset M$ is a convex surface, recall that a \emph{bypass} for $\Sigma$ is an oriented embedded half-disk $D$ such that
\begin{enumerate}
	\item $\partial D$ is the union of two Legendrian arcs $\alpha_1,\alpha_2$ which intersect at their endpoints;
	\item $D$ intersects $\Sigma$ transversely along $\alpha_1$;
	\item $D$ has positive elliptic tangencies at $\alpha_1\cap\alpha_2$, one negative elliptic tangency on the interior of $\alpha_1$, and only positive tangencies along $\alpha_2$, alternating between elliptic and hyperbolic;
	\item $\alpha_1$ intersects the dividing set $\Gamma_\Sigma$ exactly at the elliptic points of $\alpha_1$.
\end{enumerate}
We will refer to $\alpha_1\subset D$ as the \emph{attaching arc} for the bypass $D$, and we say that $D$ \emph{straddles} the component $c\subset\Gamma_\Sigma$ containing the negative elliptic tangency.\\

\begin{figure}
\centering
\begin{tikzpicture}[scale=1.35]
\draw (0,0) rectangle (3,3);
\draw [red,thick] (0,0.5) -- (3,0.5)
	  (0,1.5) -- (3,1.5)
	  (0,2.5) -- (3,2.5);
\draw (1.5,2.5) -- (1.5,0.5);
\node [right] at (1.5,2) {$\alpha$};

\draw (4,0) rectangle (7,3);
\draw [red,thick] (4,2.5) to [out=0,in=180] (7,0.5)
	  (4,1.5) .. controls (5.5,1.5) and (5.5,0.5) .. (4,0.5)
	  (7,2.5) .. controls (5.5,2.5) and (5.5,1.5) .. (7,1.5);
\end{tikzpicture}
\caption{On the left, the dividing set $\Gamma_{\Sigma_0}$ in a neighborhood the attaching arc $\alpha$.  On the right, the dividing set $\Gamma_{\Sigma_1}$.}
\label{fig:bypass-attachment}
\end{figure}
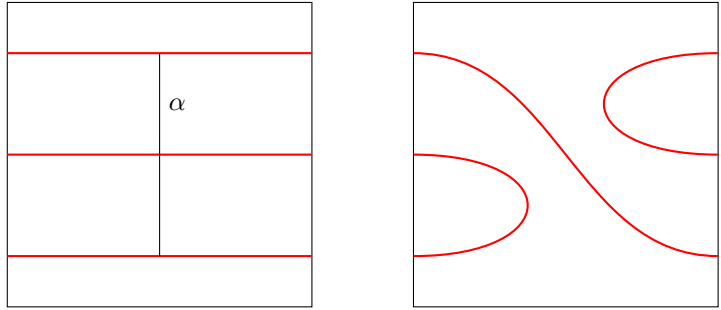

When a bypass $D$ for $\Sigma$ exists it is known that there is a neighborhood of $\Sigma\cup D$, diffeomorphic to $\Sigma\times[0,1]$, such that $\Sigma_i=\Sigma\times\{i\}$, $i=0,1$, are convex and the dividing set $\Gamma_{\Sigma_1}$ is obtained from $\Gamma_{\Sigma_0}$ by Honda's \emph{bypass attachment} operation, depicted in Figure \ref{fig:bypass-attachment}.  A bypass which does not change the dividing set is said to be \emph{trivial}.  The effect of bypass attachment on the dividing set of $\Sigma$ can also be seen through Giroux's contact handle decompositions.  The surface $\Sigma_1$ is obtained from $\Sigma$ by attaching a contact 1-handle and then a contact 2-handle in topologically canceling manner.  A detailed description of this process can be found in \cite[Section 3]{ozbagci2011contact}.\\

We are now prepared to define our splitting surfaces.

\begin{definition}
We call a closed, connected, oriented, convex surface $\Sigma\subset(M,\xi)$ of genus $g$ a \emph{splitting surface} if
\begin{enumerate}
	\item the regions $R_{\pm}(\Sigma)$ are planar, with $g+1$ boundary components $c_1,\ldots,c_{g+1}$;
	\item there exist bypasses $D^{\pm}_1,\ldots,D^\pm_g\subset(M,\xi)$, attached to $\Sigma$ along Legendrian arcs $\alpha^{\pm}_1,\ldots,\alpha^\pm_g$, with $\alpha^\pm_i$ straddling $c_i$ and having its endpoints on $c_{g+1}$;
	\item for $i=1,\ldots,g$, there is an arc $a_i\subset c_{g+1}$ which contains the endpoints of $\alpha_i^+$ and $\alpha_i^-$, and contains no endpoints of $\alpha_j^\pm$ for $j\neq i$;
	\item the bypasses $D^+_1,\ldots,D^+_g$ are attached from one side of $\Sigma$ and the bypasses $D^-_1,\ldots,D^-_g$ are attached from the other side.
\end{enumerate}
\end{definition}

\begin{figure}
\centering
\begin{tikzpicture}[scale=1.35]
\begin{scope}[decoration={markings,
	mark = at position 0.5 with {\arrow[very thick]{>}}
}]
\draw [postaction={decorate},thick] (-0.5,2) -- (2,2);

\draw [->] (2.35,2.2) -- (3.9,2.8);
\node [above] at (3.15,2.5) {$S_+$};
\draw [->] (2.35,1.8) -- (3.9,1.2);
\node [below] at (3.15,1.3) {$S_-$};

\draw [thick] (4.25,1) -- (4.75,1);
\draw [postaction={decorate},thick] (4.75,1) to [out=0,in=180] (6.75,0.5);
\draw [thick] (6.75,0.5) to [out=180,in=0] (5.75,1);
\draw [thick] (5.75,1) -- (7.25,1);

\draw [thick] (4.25,3) -- (5.75,3);
\draw [thick] (5.75,3) to [out=180,in=0] (4.75,2.5);
\draw [postaction={decorate},thick] (4.75,2.5) to [out=0,in=180] (6.75,3);
\draw [thick] (6.75,3) -- (7.25,3);
\end{scope}
\end{tikzpicture}
\caption{Stabilization of the $x$-axis in the front projection.}
\label{fig:stabilization}
\end{figure}
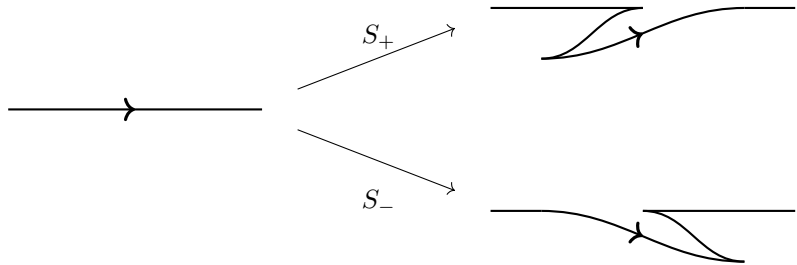

\subsection{Liouville hypersurfaces}\label{subsec:liouville}
The last statement of Theorem \ref{main-theorem} says that we can obtain our original symplectic filling $(W,\omega)$ from our new filling $(W',\omega')$ by attaching a symplectic handle.  The construction of the handle in question begins with the positive region $R_+(\Sigma)$ of our splitting surface, which is a \emph{Liouville hypersurface}.  In this section we want to review Avdek's definition (\cite{avdek2012liouville}) of Liouville hypersurfaces and produce the corresponding symplectic handle.\\

\begin{definition}
A \emph{Liouville domain} is a pair $(\Sigma_L,\beta)$, where
\begin{enumerate}
	\item $\Sigma_L$ is a smooth, compact manifold with boundary;
	\item $d\beta$ is a symplectic form on $\Sigma_L$;
	\item the vector field $X_\beta$ defined by $\iota_{X_\beta}d\beta=\beta$ points out of $\partial\Sigma_L$ transversely.
\end{enumerate}
We call $X_\beta$ the \emph{Liouville vector field} for $(\Sigma_L,\beta)$.\\
\end{definition}

\begin{definition}
Let $(M,\xi)$ be a contact 3-manifold and let $(\Sigma_L,\beta)$ be a 2-dimensional Liouville domain.  A \emph{Liouville embedding} $i\colon(\Sigma_L,\beta)\hookrightarrow(M,\xi)$ is an embedding for which there exists a contact form $\lambda$ on $(M,\xi)$ satisfying $i^*\lambda=\beta$.  We call the image of a Liouville embedding a \emph{Liouville hypersurface} and denote it by $(\Sigma_L,\beta)\subset(M,\xi)$.\\
\end{definition}

The standard example of a Liouville hypersurface is the positive region of a convex surface.  The following result says that these regions are in fact the source of all Liouville hypersurfaces.

\begin{proposition}[{\cite[Proposition 6.3]{avdek2012liouville}}]
A hypersurface $\Sigma_L\subset(M,\xi)$ is Liouville if and only if there is a convex hypersurface $\Sigma\subset(M,\xi)$ for which $\Sigma_L$ is $R_+(\Sigma)$ minus some collar neighborhood of $\partial R_+(\Sigma)$.
\end{proposition}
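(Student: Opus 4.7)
For the only-if direction, suppose $\Sigma\subset(M,\xi)$ is convex. Giroux's neighborhood theorem gives a tubular neighborhood $\Sigma\times(-\epsilon,\epsilon)\subset M$ on which the contact form reads $\lambda = \beta + u\,dt$, with $\beta\in\Omega^1(\Sigma)$, $u\in C^\infty(\Sigma)$, $\Gamma_\Sigma = \{u=0\}$, and $R_\pm(\Sigma) = \{\pm u > 0\}$. The description of a convex surface recalled above in Section \ref{subsec:convex-surfaces} supplies an area form $\omega$ and a vector field $Y$ on $R_+(\Sigma)$ with $Y$ directing the characteristic foliation, $\mathcal{L}_Y\omega>0$, and $Y$ pointing transversely out of $R_+(\Sigma)$ along $\Gamma_\Sigma$; equivalently, $\iota_Y\omega$ is a Liouville primitive for $\omega$ whose Liouville vector field $Y$ is outward-pointing along $\Gamma_\Sigma$. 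Since $Y$ degenerates on $\Gamma_\Sigma$ itself, I would excise a small collar neighborhood of $\Gamma_\Sigma$ inside $R_+(\Sigma)$ to obtain $\Sigma_L$, chosen so that $\partial\Sigma_L$ is a regular level of $u$ and $Y$ is strictly outward-pointing there. After rescaling $\lambda$ by a positive function (leaving $\xi$ unchanged) so that $i^*\lambda$ agrees with $\iota_Y\omega$ on $\Sigma_L$, the inclusion $\Sigma_L\hookrightarrow(M,\xi)$ is a Liouville embedding.

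For the if direction, start with $(\Sigma_L,\beta)\subset(M,\xi)$ and a contact form $\lambda$ satisfying $i^*\lambda=\beta$. Since $X_\beta$ points transversely outward along $\partial\Sigma_L$, the Liouville flow supplies a collar $(-\delta,0]\times\partial\Sigma_L\subset\Sigma_L$ on which $\beta = e^s\beta_0$ with $\beta_0 := \beta|_{\partial\Sigma_L}$ nowhere vanishing, so that $\partial\Sigma_L$ is a positively transverse curve in $(M,\xi)$. Invoking the standard neighborhood theorem for transverse curves, I would write down a Giroux-type model $\lambda \sim \beta' + u'\,dt$ in a neighborhood of $\partial\Sigma_L$ in $M$, with $u'>0$ on a slight enlargement of $\Sigma_L$, and then extend $\Sigma_L$ across $\partial\Sigma_L$ as a surface on which $u'$ decreases through zero (crossing a dividing curve $\Gamma$) and becomes negative (entering a negative region). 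The resulting glued surface $\Sigma\subset M$ is convex with positive region a collar-thickening of $\Sigma_L$, so that $\Sigma_L = R_+(\Sigma)$ minus a collar of $\partial R_+(\Sigma)$, as required.

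The hard part is the matching in the if direction: one must arrange the Giroux-model extension so that $\beta' + u'\,dt$ agrees with the given $\lambda$ on the overlap with $\Sigma_L$, so that $u'$ interpolates smoothly from the positive values inherited from the Liouville collar to negative values past $\partial\Sigma_L$, and so that the contact condition $u'\,d\beta' + \beta'\wedge du' > 0$ holds throughout. The Liouville data on $\Sigma_L$ and the transverse-curve model near $\partial\Sigma_L$ are compatible by construction, so this reduces to an interpolation problem, but verifying the contact condition across the gluing is the main technical check.
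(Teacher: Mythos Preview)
The paper does not supply its own proof of this proposition; it is simply quoted from \cite{avdek2012liouville} as background, so there is no argument here against which to compare yours.

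That said, two remarks on your write-up. First, you have the labels reversed: in ``$\Sigma_L$ is Liouville if and only if there is a convex $\Sigma$ \ldots'', the \emph{if} direction assumes the convex surface and produces the Liouville structure, while the \emph{only if} direction starts from a Liouville embedding and manufactures the ambient convex surface. Your first paragraph is the \emph{if} direction and your second is the \emph{only if}.

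Second, in the (actual) \emph{if} direction your detour through an auxiliary area form $\omega$ and a rescaling of $\lambda$ is unnecessary. In the Giroux model $\lambda=\beta+u\,dt$ the restriction $i^*\lambda=\beta$ is already a Liouville form on the interior of $R_+(\Sigma)$: the contact condition forces $u\,d\beta+\beta\wedge du>0$, so on $\{u>0\}$ the form $d\beta$ is positive wherever $\beta$ vanishes and more generally $(R_+(\Sigma),\beta)$ is Liouville with its Liouville vector field directing the characteristic foliation and pointing out along $\Gamma_\Sigma$. Excising a collar then gives the Liouville embedding directly, with no rescaling. Your sketch of the harder (\emph{only if}) direction---extend past the positively transverse boundary using a local model and arrange the function $u'$ to change sign---is the right idea and is essentially how Avdek proceeds; the interpolation you flag is indeed the only nontrivial check.
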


Given a Liouville hypersurface $(\Sigma_L,\beta)$, Avdek constructs a symplectic handle $(H_{\Sigma_L},\omega_\beta)$, and we summarize this construction here.  For full details see \cite{avdek2012liouville}.\\

The construction begins with a standard neighborhood $\mathcal{N}(\Sigma_L)$ of $(\Sigma_L,\beta)$ in $(M,\xi)$.  If $\lambda$ is a contact form for $(M,\xi)$ satisfying $\lambda|_{T\Sigma_L}=\beta$, then there is a neighborhood $N(\Sigma_L) = [-\epsilon,\epsilon]\times\Sigma_L$  with $\lambda|_{N(\Sigma_L)}=dz+\beta$, for some sufficiently small $\epsilon$.  This neighborhood will have corners at $\{\pm\epsilon\}\times\partial\Sigma_L$, but an edge-rounding process produces $\mathcal{N}(\Sigma_L)$, a neighborhood of $(\Sigma_L,\beta)$ with smooth, convex boundary.\\

With an abstract copy of this standard neighborhood in hand, consider the symplectic manifold
\[
(H_{\Sigma_L},\omega_\beta) = ([-1,1]\times\mathcal{N}(\Sigma),d\theta\wedge dz+d\beta),
\]
where $\theta$ and $z$ are the coordinates on $[-1,1]$ and $[-\epsilon,\epsilon]$, respectively.  This is the symplectic handle constructed from $(\Sigma_L,\beta)$.  There is a vector field $V_\beta=z\partial_z+X_\beta$ which points transversely out of $\partial H_{\Sigma_L}$ along $[-1,1]\times\partial\mathcal{N}(\Sigma_L)$ and whose flow dilates $\omega_\beta$.  This vector field can be perturbed so that it also points into $\partial H_{\Sigma_L}$ along $\{\pm 1\}\times\mathcal{N}(\Sigma_L)$, making this portion of $\partial H(\Sigma_L)$ concave while $[-1,1]\times\partial\mathcal{N}(\Sigma_L)$ is convex.\\

Let us also describe how Avdek attaches the symplectic handle $(H_{\Sigma_L},\omega_\beta)$ to a strong symplectic filling $(W,\omega)$.  For this attachment to be possible there must exist a pair of disjoint Liouville embeddings
\[
i_1\colon(\Sigma_L,\beta)\hookrightarrow (M,\xi)
\quad\text{and}\quad
i_2\colon(\Sigma_L,\beta)\hookrightarrow (M,\xi),
\]
where $(M,\xi)$ is the boundary of $(W,\omega)$.  These embeddings admit standard neighborhoods $\mathcal{N}(i_1(\Sigma_L))$ and $\mathcal{N}(i_2(\Sigma_L))$, each contactomorphic to $\mathcal{N}(\Sigma_L)$.  We form a sort of symplectic-filling-with-corners $W^\square$ by removing $\mathcal{N}(i_1(\Sigma_L))$ and $\mathcal{N}(i_2(\Sigma_L))$ from $(W,\omega)$ and attaching $H_{\Sigma_L}$ along $\{\pm 1\}\times\mathcal{N}(\Sigma_L)$.  Because $(W,\omega)$ is a strong filling of $(M,\xi)$, there is a Liouville vector field on $W$ pointing out of $\partial W$.  We glue $(H_{\Sigma_L},\omega_\beta)$ to $(W\setminus(\mathcal{N}(i_1(\Sigma_L))\cup\mathcal{N}(i_2(\Sigma_L))),\omega)$ in such a way that this vector field agrees with $V_\beta$ along $\{\pm 1\}\times \mathcal{N}(\Sigma_L)$.  The edges of $W^\square$ are then rounded to produce a new symplectic filling $(W',\omega')$.  This new filling is the result of attaching the handle $(H_{\Sigma_L},\omega_\beta)$ to $(W,\omega)$ along $i_1(\Sigma_L)$ and $i_2(\Sigma_L)$.

\section{Proof of Theorem \ref{main-theorem}}\label{sec:main-theorem}
Throughout this section we take $(M,\xi)$ to be a contact manifold satisfying the hypotheses of Theorem \ref{main-theorem}.  Let $(W,\omega)$ be a strong filling of $(M,\xi)$ and $\Sigma_g$ a splitting surface of genus $g$, with dividing set $\Gamma_{\Sigma_g}=c_1\cup\cdots\cup c_{g+1}$.  There are attaching arcs $\alpha^{\pm}_1,\ldots,\alpha^{\pm}_g$ and associated bypasses $D^{\pm}_1,\ldots,D^{\pm}_g$ as described in the definition of splitting surfaces.\\

We will denote by $(\widehat{W},\widehat{\omega})$ the completion of $(W,\omega)$, obtained by attaching the positive end $([0,\infty)\times M,d(e^t\alpha))$ of the symplectization of $M$.  We take $J$ to be an almost complex structure on $\widehat{W}$ adapted to the contact form $\alpha$ for $(M,\xi)$.  That is, $J$ is translation invariant, $J\xi=\xi$, and $J\partial_t=R_\alpha$, where $t$ is the $[0,\infty)$-coordinate on the symplectization and $R_\alpha$ is the Reeb vector field for $\alpha$.\\

We will prove Theorem \ref{main-theorem} by adapting the proof of \cite[Theorem 1.1]{menke2018jsj}.  Specifically, our goal is to use $\Sigma_g$ to construct a 1-parameter family $\mathcal{S}$ which sweeps out a properly embedded handlebody in $(\widehat{W},\widehat{\omega})$.  Removing this handlebody from $(W,\omega)$ will leave us with the desired manifold $(W',\omega')$.\\

Because our proof is adapted from \cite{menke2018jsj}, many of our lemmas are arbitrary-genus analogues of lemmas found there.  Some of these require new proofs, while others, such as the following standardization of the contact form on $M$, are genus-independent and therefore survive unaltered.

\begin{lemma}[{\cite[Lemma 3.1]{menke2018jsj}}]\label{lemma:dividing-orbits}
There is a choice of contact form on a neighborhood of $\Sigma_g$ such that the components $\Gamma_{\Sigma_g}$ are non-degenerate elliptic Reeb orbits of Conley-Zehnder index 1 with respect to the framing induced by $\Sigma_g$.
\end{lemma}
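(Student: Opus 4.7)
\medskip

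The plan is to note that Giroux's $I$-invariant neighborhood theorem reduces everything to a local computation near each dividing curve, so the argument in \cite[Lemma~3.1]{menke2018jsj} carries through verbatim for any genus. Concretely, I would first use convexity of $\Sigma_g$ to produce a neighborhood $N = \Sigma_g \times [-\epsilon,\epsilon]_t$ on which $\xi$ is given by a $t$-invariant contact form
\begin{equation*}
  \alpha \;=\; \beta + u\,dt,
\end{equation*}
for some $\beta\in\Omega^1(\Sigma_g)$ and $u\in C^\infty(\Sigma_g)$, with $\Gamma_{\Sigma_g}=\{u=0\}$ and $du\ne 0$ along $\Gamma_{\Sigma_g}$. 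The contact condition becomes $u\,d\beta - du\wedge\beta > 0$, an open condition on $(u,\beta)$.

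Next, I would put $(u,\beta)$ into a normal form near $\Gamma_{\Sigma_g}$. Pick a tubular neighborhood $U_i\cong S^1_{y}\times(-\delta,\delta)_{x}$ of each component $c_i\subset\Gamma_{\Sigma_g}$ in which $c_i=\{x=0\}$ and the $\Sigma_g$-framing of $c_i$ is given by $\partial_x$. Using a Moser-type interpolation (supported in $N\cap(U_i\times[-\epsilon,\epsilon])$) one may replace $(u,\beta)$ by a pair satisfying
\begin{equation*}
  u(x,y) \;=\; x,\qquad \beta(x,y) \;=\; f(x)\,dy,
\end{equation*}
with $f(0)>0$ and $f'(0)$ of a prescribed sign, while preserving $u\,d\beta - du\wedge\beta>0$; the modification is local in $N$ and leaves the dividing set unchanged. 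A direct calculation then gives
\begin{equation*}
  R_\alpha\big|_{c_i} \;=\; \tfrac{1}{f(0)}\,\partial_y,
\end{equation*}
so each $c_i$ is a closed Reeb orbit traversed in the direction induced by the co-orientation of $\Sigma_g$.

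Finally, I would linearize the Reeb flow along $c_i$ in the normal $(x,t)$-plane. In the chosen normal form $d\alpha = f'(x)\,dx\wedge dy + dx\wedge dt$, and a short computation shows that the linearized return map with respect to the $\Sigma_g$-framing is conjugate to a rotation by an angle $\theta_i\in(0,2\pi)$ determined by $f(0)$ and $f'(0)$. By choosing $f$ generically one arranges $\theta_i\notin 2\pi\mathbb{Z}$ for every $i$, so each $c_i$ is a non-degenerate elliptic Reeb orbit; the rotation angle lying in $(0,2\pi)$ gives Conley--Zehnder index exactly $1$ with respect to the $\Sigma_g$-framing.

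The only subtlety is the second step: one must perform the normal-form interpolation near each $c_i$ without destroying either the contact condition or the dividing-set description on the rest of $\Sigma_g$. This is handled by doing the interpolation in a sufficiently small tubular neighborhood of $\Gamma_{\Sigma_g}$ and using a cutoff function, which is exactly the construction in \cite[Lemma~3.1]{menke2018jsj}. Because the entire argument depends only on a neighborhood of $\Gamma_{\Sigma_g}$, whose local model is independent of the ambient genus, the proof transfers to the splitting-surface setting without modification.
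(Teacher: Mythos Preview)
Your proposal is correct and aligned with the paper's treatment: the paper does not reprove the lemma at all but simply cites \cite[Lemma~3.1]{menke2018jsj} and remarks that the argument is genus-independent, since it takes place entirely in a neighborhood of the dividing curves. Your sketch fills in the details of that cited proof and makes the same key observation, so there is no discrepancy.
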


Denote the Reeb orbits constructed in Lemma \ref{lemma:dividing-orbits} by $e_1,\ldots,e_{g+1}$, with $e_{g+1}$ containing the endpoints of $\alpha^\pm_1,\ldots,\alpha^\pm_g$ and $e_i$ the dividing curve straddled by $\alpha^\pm_i$.  Menke's proof of Lemma \ref{lemma:dividing-orbits} produces an explicit model for $\Sigma_g$ with these orbits comprising the dividing set, and this model is depicted in Figure \ref{fig:dividing-curves}.

\begin{figure}
\centering
\Large
\def\svgwidth{0.8\linewidth}
\begingroup%
  \makeatletter%
  \providecommand\color[2][]{%
    \errmessage{(Inkscape) Color is used for the text in Inkscape, but the package 'color.sty' is not loaded}%
    \renewcommand\color[2][]{}%
  }%
  \providecommand\transparent[1]{%
    \errmessage{(Inkscape) Transparency is used (non-zero) for the text in Inkscape, but the package 'transparent.sty' is not loaded}%
    \renewcommand\transparent[1]{}%
  }%
  \providecommand\rotatebox[2]{#2}%
  \newcommand*\fsize{\dimexpr\f@size pt\relax}%
  \newcommand*\lineheight[1]{\fontsize{\fsize}{#1\fsize}\selectfont}%
  \ifx\svgwidth\undefined%
    \setlength{\unitlength}{498.96883266bp}%
    \ifx\svgscale\undefined%
      \relax%
    \else%
      \setlength{\unitlength}{\unitlength * \real{\svgscale}}%
    \fi%
  \else%
    \setlength{\unitlength}{\svgwidth}%
  \fi%
  \global\let\svgwidth\undefined%
  \global\let\svgscale\undefined%
  \makeatother%
  \begin{picture}(1,0.3777792)%
    \lineheight{1}%
    \setlength\tabcolsep{0pt}%
    \put(0,0){\includegraphics[width=\unitlength,page=1]{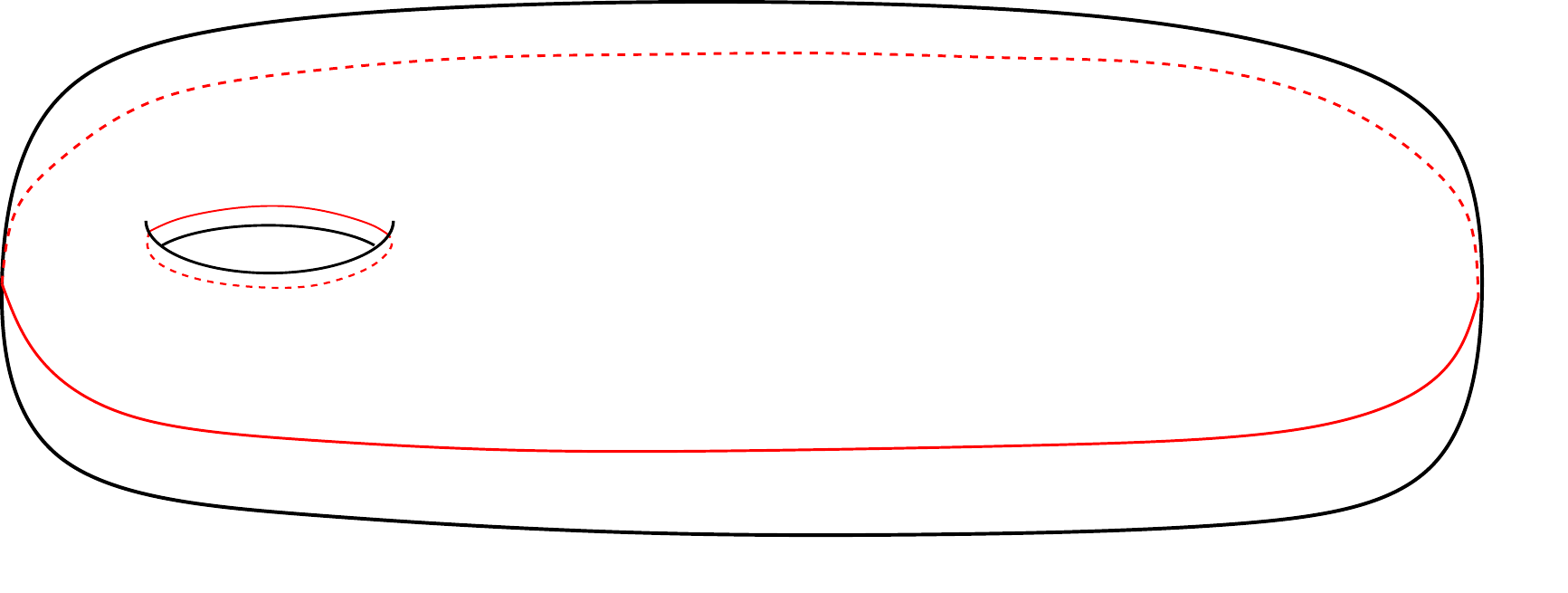}}%
    \put(0.16177014,0.25619264){\color[rgb]{1,0,0}\makebox(0,0)[lt]{\lineheight{1.25}\smash{\begin{tabular}[t]{l}$e_1$\end{tabular}}}}%
    \put(0,0){\includegraphics[width=\unitlength,page=2]{dividing-curves.pdf}}%
    \put(0.37220412,0.25619264){\color[rgb]{1,0,0}\makebox(0,0)[lt]{\lineheight{1.25}\smash{\begin{tabular}[t]{l}$e_2$\end{tabular}}}}%
    \put(0.78405349,0.25619264){\color[rgb]{1,0,0}\makebox(0,0)[lt]{\lineheight{1.25}\smash{\begin{tabular}[t]{l}$e_g$\end{tabular}}}}%
    \put(0.54355741,0.10588251){\color[rgb]{1,0,0}\makebox(0,0)[lt]{\lineheight{1.25}\smash{\begin{tabular}[t]{l}$e_{g+1}$\end{tabular}}}}%
  \end{picture}%
\endgroup%

\caption{A splitting surface $\Sigma_g$ with dividing curves $e_1,\ldots,e_{g+1}$, each of which is an elliptic orbit with Conley-Zehnder index 1.  Some of the attaching arcs are also depicted.}
\label{fig:dividing-curves}
\end{figure}

\begin{lemma}\label{lemma:description-of-orbits}
Let $\Sigma_g\subset (M,\xi)$ be a splitting surface of genus $g>1$, with dividing set $e_1\cup\cdots\cup e_{g+1}$ and bypasses $D^\pm_1,\ldots,D^\pm_g$ as described above.  There is a one-sided neighborhood
\[
N = N(\Sigma_g\cup D^+_1\cup \cdots\cup D^+_g)
\]
and an extension of the contact form $\alpha$ chosen in Lemma \ref{lemma:dividing-orbits} to $N$.  This neighborhood contains contact 1-handles $N_1^i$, contact 2-handles $N_2^i$, and surfaces with corners $\Sigma^{i-1}_g,\Sigma^i_{g+1}$ of genus $g$ and $(g+1)$, respectively, for $i=1,\ldots,g$.  Moreover,
\begin{enumerate}
	\item the boundary $\partial N$ is given by $\Sigma_g$ and $\tilde{\Sigma}$, where $\tilde{\Sigma}$ is another convex surface of genus $g$, with dividing set given by elliptic orbits $\tilde{e}_1,\ldots,\tilde{e}_{g+1}$;
	
	\item $\Sigma^0_g=\Sigma_g$, and for $i=1,\ldots,g-1$, $\Sigma^i_g$ meets $\tilde{\Sigma}$ in the orbits $\tilde{e}_1,\ldots,\tilde{e}_i$, meets $\Sigma_g$ in the orbits $e_{i+1},\ldots,e_g$, and has dividing set given by these orbits, along with an elliptic orbit $e^{i+1}_{g+1}$;

	\item for $i=1,\ldots,g$ we have a neighborhood
	\[
	N(\Sigma^{i-1}_g\cup D^+_i) = N_1^i \cup_{\Sigma_{g+1}^i} N_2^i,
	\]
	with $\Sigma^i_{g+1}$ containing the orbits $\tilde{e}_1,\ldots,\tilde{e}_{i-1},e_i,\ldots,e_g,e^{i+1}_{g+1}$, as well as the elliptic orbit $\overline{e}_i$;
	
	\item all of the elliptic orbits listed have Conley-Zehnder index 1;
	
	\item the Reeb vector field $R_\alpha$ is positively (negatively) transverse to the positive (negative) region of each of the surfaces listed;
	
	\item there are hyperbolic orbits $h^i_{g+1},\tilde{h}_i$ in $N_1^i$ and $N_2^i$, respectively, which have Conley-Zehnder index 0 with respect to $\Sigma_g$;
	
	\item if $\gamma$ is any other Reeb orbit in $N$ and $\bar{\gamma}$ is any of $e_i,h_{g+1}^i$, or $\tilde{h}_i$, then\label{part:action-bound}
	\[
	\mathcal{A}(\bar{\gamma}) < \mathcal{A}(\bar{e}_j),\mathcal{A}(\tilde{e}_j) \ll\mathcal{A}(\gamma),
	\]
	for all $j$.  In particular, $\mathcal{A}(\gamma)$ is sufficiently large as to prohibit the existence of a pseudoholomorphic curve in the symplectization of $M$ from having $\gamma$ among its negative ends while its positive ends form a subset of the curves listed.\label{vaugon-action-bounds}
\end{enumerate}
\end{lemma}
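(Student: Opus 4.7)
The plan is to construct $N$ inductively, handling one bypass $D_i^+$ at a time for $i=1,\ldots,g$. Recall from Section \ref{subsec:bypasses} that a bypass attachment decomposes, in the sense of Giroux, as a contact 1-handle attachment followed by a topologically cancelling contact 2-handle; these will be the $N_1^i$ and $N_2^i$. Starting from $\Sigma_g^0:=\Sigma_g$ with contact form standardized by Lemma \ref{lemma:dividing-orbits}, I would assume inductively that $\Sigma_g^{i-1}$ has been constructed with dividing set realized as the elliptic Reeb orbits $\tilde e_1,\ldots,\tilde e_{i-1},e_i,\ldots,e_g,e_{g+1}^i$, and then attach $N_1^i$ along two disks in a neighborhood of the feet of $\alpha_i^+$, which lie on $e_i$ and $e_{g+1}^i$ by the hypothesis on splitting surfaces. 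The resulting convex surface-with-corners is $\Sigma_{g+1}^i$, on which the dividing set has been modified by a connect-sum that merges $e_i$ and $e_{g+1}^i$ along a new elliptic orbit $\overline e_i$ running through the 1-handle, and inside $N_1^i$ a Morse--Bott degeneration as in the proof of Lemma \ref{lemma:dividing-orbits} produces a single hyperbolic orbit $h_{g+1}^i$. Attaching $N_2^i$ along the disk dual to $\alpha_i^+$ then cancels the topological change, so $\Sigma_g^i$ again has genus $g$; on the outgoing boundary the orbit $\overline e_i$ is replaced by $\tilde e_i\subset\tilde\Sigma$, and $N_2^i$ carries a hyperbolic orbit $\tilde h_i$ transverse to the co-core. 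Iterating to $i=g$ produces $N$, $\tilde\Sigma$, and all combinatorial data in items (1)--(3).

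Items (4)--(6) follow from the local normal forms of Lemma \ref{lemma:dividing-orbits}. Each elliptic dividing orbit sits in a neighborhood in which $R_\alpha$ is tangent to the orbit and the linearized return map is a small rotation of Conley--Zehnder index $1$ with respect to the surface framing; applying this model near each of $e_i, \overline e_i, \tilde e_i, e_{g+1}^i$ gives the elliptic index claims. The hyperbolic orbits $h_{g+1}^i$ and $\tilde h_i$ arise as cores of Giroux-style Morse contact handles whose standard model has linearized Reeb flow a hyperbolic shear of Conley--Zehnder index $0$. Transversality of $R_\alpha$ to $R_\pm$ of each listed surface is immediate from the corresponding normal form, since $d\alpha$ restricts to an area form of the correct sign on each region.

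The main obstacle is item (7), the action hierarchy. Following the Vaugon-style perturbation used in \cite{menke2018jsj}, I would build each handle $N_1^i, N_2^i$ inside a very thin neighborhood and rescale the contact form so that the actions of the named orbits $e_i, h_{g+1}^i, \tilde h_i$ lie in a prescribed small interval, while $\mathcal{A}(\overline e_j)$ and $\mathcal{A}(\tilde e_j)$ are somewhat larger. Any other closed Reeb orbit $\gamma$ in $N$ must either wind nontrivially around a thin handle neck or traverse multiple handles, and in either case a direct length estimate drives $\mathcal{A}(\gamma)$ to infinity as the necks are made thinner. The prohibition on pseudoholomorphic curves is then a Stokes computation: the $d\alpha$-energy of such a curve equals the total action of its positive ends minus the total action of its negative ends, which is negative once $\mathcal{A}(\gamma)$ exceeds the sum of the candidate positive-end actions, contradicting positivity of $d\alpha$ on $J$-holomorphic curves. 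The delicate technical point---and the main difficulty---is to arrange this action hierarchy globally across all $2g$ handles without disturbing the Conley--Zehnder normal forms near the named orbits, which is precisely what the Vaugon argument accomplishes one bypass at a time.
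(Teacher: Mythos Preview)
Your overall strategy---inductive attachment of the bypasses as contact $1$- and $2$-handles, with the action hierarchy supplied by Vaugon's theorem---is exactly the approach the paper takes, and your treatment of items (4)--(7) matches theirs.  However, your description of the combinatorics in the inductive step is wrong in a way that matters for the rest of Section~\ref{sec:main-theorem}.

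You write that the feet of $\alpha_i^+$ lie on $e_i$ and $e_{g+1}^i$, and that the $1$-handle therefore ``merges $e_i$ and $e_{g+1}^i$.''  But the definition of a splitting surface says that $\alpha_i^\pm$ \emph{straddles} $c_i$ and has \emph{both} endpoints on $c_{g+1}$; in the inductive step this means both feet of the $1$-handle sit on $e_{g+1}^i$, with the arc crossing $e_i$ at its interior elliptic point.  Consequently the $1$-handle does not merge $e_i$ with anything: it splits $e_{g+1}^i$ into the two new elliptic orbits $\overline{e}_i$ and $e_{g+1}^{i+1}$, while $e_i$ persists unchanged on $\Sigma_{g+1}^i$ (compare item (3) of the lemma and Figure~\ref{fig:neighborhood-orbits}).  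The $2$-handle $N_2^i$ then merges $e_i$ with $\overline{e}_i$ to produce $\tilde{e}_i$.  The paper phrases the creation of $h_{g+1}^i$ as coming from a convex-to-sutured boundary modification performed before attaching $N_1^i$, rather than a Morse--Bott degeneration inside the handle; the sutured-to-convex step is then needed before attaching $N_2^i$.

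This is not merely cosmetic: the homology computations in the proofs of Lemmas~\ref{lemma:boundary-curves} and~\ref{lemma:middle-interval} depend on knowing exactly which of the named orbits lie on each intermediate surface, and your version would give incorrect classes for $\overline{e}_i$, $\tilde{e}_i$, and $e_{g+1}^{i+1}$.  Once you correct the location of the feet of $\alpha_i^+$, your argument goes through and coincides with the paper's.
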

\begin{proof}
As in the proof of \cite[Lemma 3.2]{menke2018jsj}, we obtain the neighborhood $N$ by successively attaching the contact handles $N_1^i$ and $N_2^i$, and we extend $\alpha$ to $N$ by extending this contact form to each of these handles.\\

The first contact handle we attach, $N_1^1$, corresponds to the bypass $D^+_1$, which has its endpoints on $e_{g+1}$.  Attaching this handle requires a convex-to-sutured boundary modification, which introduces the hyperbolic orbit $h^1_{g+1}$.  We then apply a sutured-to-convex boundary modification before attaching $N_2^1$.  The result is an extension of $\alpha$ to the neighborhood $N(\Sigma_g\cup D^+_1)$ as described, and we repeat this process inductively to obtain $N$.  We choose our extension of $\alpha$ across each 1-handle so that the actions of $\overline{e}_i$ and $e_{g+1}^{i+1}$ are much larger than those of $e_i, e^i_{g+1},$ and $h^i_{g+1}$.  The fact that all other Reeb orbits intersecting $N$ have sufficiently large action as to be irrelevant follows from \cite[Theorem 2.1]{vaugon2015reeb}.
\end{proof}

A schematic of the neighborhood $N(\Sigma^{i-1}_g\cup D^+_i)$ is depicted in Figure \ref{fig:neighborhood-orbits}.

\begin{figure}
\centering\Large
\def\svgwidth{0.7\linewidth}
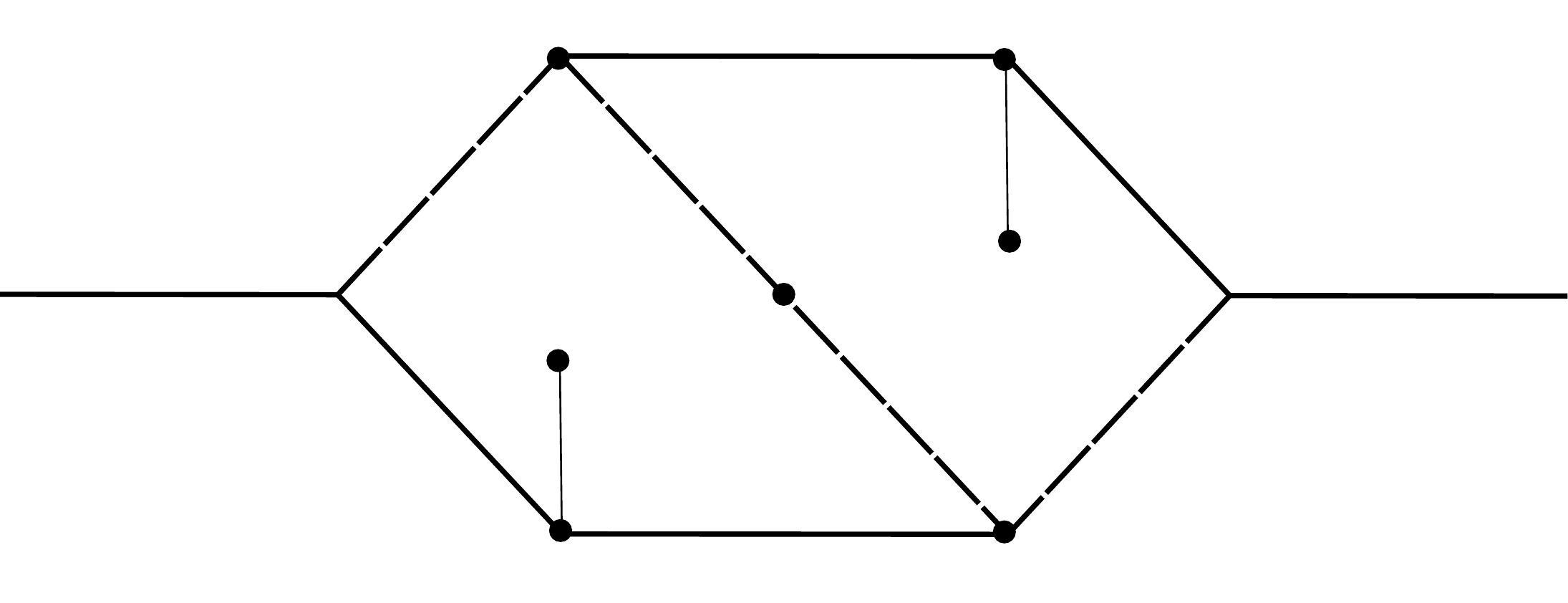
\caption{Orbits in the neighborhood $N(\Sigma^{i-1}_g\cup D^+_i)$.  The heavily shaded curves represent the walls identified in Lemma \ref{lemma:walls}.  The segments on the far left and right are included in $\Sigma_g^{i-1},\Sigma_g^i$, and $\Sigma_{g+1}^i$.  The middle (dashed) segment is included only in $\Sigma_{g+1}^i$, while the dashed segments in the upper left and lower right are also included in $\Sigma_g^{i-1}$ and $\Sigma_g^i$, respectively.}
\label{fig:neighborhood-orbits}
\end{figure}

As stated above, we will build a 1-parameter family of holomorphic curves in $\widehat{W}$ that will sweep out a handlebody of genus $g$.  The splitting surface $\Sigma_g$ will help us do this by providing targets $R_{\pm}(\Sigma_g)$ for which our family can aim at its ends.  That is, our 1-parameter family will have its ends in the symplectization part $[0,\infty)\times M$ of $\widehat{W}$, and we want the projection $\pi\colon[0,\infty)\times M\to M$ to take the ends of our family to the regions $R_{\pm}(\Sigma_g)$.  The first step towards building our 1-parameter family is then to lift $R_{\pm}(\Sigma_g)$ to embedded holomorphic curves
\[
u_{\pm}\colon S^2\setminus\{p_1,\ldots,p_{g+1}\}\to[0,\infty)\times M.
\]
We can obtain these lifts by employing the following strategy: for each $1\leq i\leq g+1$ we construct a holomorphic half-cylinder
\[
u_i\colon[0,\infty)\times S^1\to \mathbb{R}\times M
\]
which is positively asymptotic to $e_i$.  These half-cylinders project under $\pi$ to collar neighborhoods of $e_1,\ldots,e_{g+1}$ in $R_{\pm}(\Sigma_g)$, the deletion of which leaves $R'_{\pm}$, a 2-dimensional Weinstein domain.  Our lifting problem is then solved if we can lift $R'_{\pm}$ to a holomorphic curve in $\mathbb{R}\times M$ and then glue the holomorphic half-cylinders $u_1,\ldots,u_{g+1}$ to the boundary.  The following lemma, proved in \cite{menke2018jsj}, allows us to lift $R'_{\pm}$.

\begin{lemma}[{\cite[Lemma 3.4]{menke2018jsj}}]\label{lemma:weinstein-domains}
Let $(B,\beta=-df\circ J)$ be a 2-dimensional Weinstein domain, where $f\colon B\to\mathbb{R}$ is a Morse function such that $\partial B$ is a level set of $f$, and let $\alpha=dt+\beta$ be a contact form on $[-\epsilon,\epsilon]\times B$, where $t$ is the coordinate on $[-\epsilon,\epsilon]$.  Then there is an adapted almost complex structure on $\mathbb{R}\times[-\epsilon,\epsilon]\times B$ such that we can lift $B$ to a holomorphic curve by the map $u(\mathbf{x})=(f(\mathbf{x}),0,\mathbf{x})$.
\end{lemma}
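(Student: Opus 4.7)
The plan is to build the required adapted almost complex structure $J$ on the symplectization $\mathbb{R}\times[-\epsilon,\epsilon]\times B$ by lifting the Weinstein complex structure on $B$ in the only natural way compatible with $\alpha$, and then to verify the Cauchy--Riemann equation for $u$ by a direct calculation. To distinguish, let me write $J_B$ for the Weinstein complex structure on $B$ and reserve $J$ for the structure on the symplectization. First compute the Reeb vector field of $\alpha = dt + \beta$: since $\alpha(\partial_t) = 1$ and $d\alpha = d\beta$ is pulled back from $B$, one has $R_\alpha = \partial_t$. Any adapted $J$ must be invariant in the symplectization coordinate $s$, satisfy $J\partial_s = \partial_t$, and preserve $\xi = \ker\alpha$. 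The projection $\pi_\xi\colon \xi\to TB$ which drops the $\partial_t$-component is a fiberwise isomorphism because $\partial_t$ is transverse to $\xi$, so we define $J|_\xi := \pi_\xi^{-1}\circ J_B\circ \pi_\xi$. Concretely, for $w\in TB$ the lift $\tilde w := w - \beta(w)\partial_t$ lies in $\xi$, and $J\tilde w = J_B w - \beta(J_B w)\partial_t$. The Weinstein compatibility of $d\beta$ with $J_B$ immediately gives compatibility of $d\alpha|_\xi = d\beta$ with $J|_\xi$, since $\pi_\xi$ intertwines the two pairings; thus $J$ is genuinely adapted.

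With this $J$ in hand, verify that $u(\mathbf{x}) = (f(\mathbf{x}),0,\mathbf{x})$ is $J$-holomorphic. For $w\in T_\mathbf{x} B$ one has $du(w) = df(w)\partial_s + w$. Using the Weinstein identity $\beta = -df\circ J_B$ (hence $df(J_B w) = -\beta(w)$ and $\beta(J_B w) = df(w)$), one computes
\[
du(J_B w) \;=\; df(J_B w)\partial_s + J_B w \;=\; -\beta(w)\partial_s + J_B w.
\]
On the other side, decompose $w = \tilde w + \beta(w)\partial_t$ and apply the defining formulas to get
\[
Jw \;=\; J\tilde w + \beta(w)J\partial_t \;=\; J_B w - df(w)\partial_t - \beta(w)\partial_s,
\]
so that $J(du(w)) = df(w)J\partial_s + Jw = df(w)\partial_t + Jw$ telescopes to $J_B w - \beta(w)\partial_s$. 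This matches $du(J_B w)$ exactly, so $J\circ du = du\circ J_B$ and $u$ is holomorphic.

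There is no substantive geometric obstacle here; the result is really a bookkeeping exercise. What makes every Reeb-direction and symplectization-direction contribution cancel is precisely the Weinstein relation $\beta = -df\circ J_B$, which forces the $\partial_t$-term of $Jw$ and the $\partial_t$-term of $J(df(w)\partial_s)$ to annihilate one another, leaving behind only the horizontal pieces needed to match $du\circ J_B$. The only care required is consistently tracking the decomposition of tangent vectors into their $\xi$ and Reeb components.
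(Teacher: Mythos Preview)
Your proof is correct: the construction of $J$ via the isomorphism $\pi_\xi\colon\xi\to TB$ and the direct verification of the Cauchy--Riemann equation using $\beta=-df\circ J_B$ (hence $df(J_Bw)=-\beta(w)$ and $\beta(J_Bw)=df(w)$) are exactly right, and the cancellation you describe is the heart of the matter. The paper itself does not supply a proof of this lemma but simply cites \cite[Lemma~3.4]{menke2018jsj}, so there is no in-paper argument to compare against; your write-up is a self-contained proof of the cited result.
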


The construction of the holomorphic half-cylinders $u_1,\ldots,u_{g+1}$ and the gluing of these to our lifts is also carried out in \cite{menke2018jsj}; this establishes the following result.

\begin{lemma}[{\cite[Lemma 3.5]{menke2018jsj}}]\label{lemma:lifts}
There are embedded holomorphic curves
\[
u_{\pm}\colon S^2\setminus\{p_1,\ldots,p_{g+1}\}\to[0,\infty)\times M
\]
such that
\begin{enumerate}
	\item both are Fredholm regular with index 2 and positively asymptotic to $e_1,\ldots,e_{g+1}$;
	\item under the projection $\pi\colon[0,\infty)\times M\to M$ we have $\im(\pi\circ u_{\pm})=R_\pm(\Sigma_g)$.
\end{enumerate}
\end{lemma}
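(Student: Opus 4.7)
The plan is to assemble $u_\pm$ from three pieces, exactly as outlined in the paragraph preceding the lemma: a holomorphic half-cylinder $u_i$ asymptotic to each dividing orbit $e_i$, a holomorphic lift $v_\pm$ of a Weinstein core $R'_\pm \subset R_\pm(\Sigma_g)$, and a gluing of these along a collection of matching boundary circles. By Lemma \ref{lemma:dividing-orbits} each $e_i$ is a non-degenerate elliptic Reeb orbit of Conley-Zehnder index $1$, so in a standard tubular neighborhood $U_i \subset M$ the contact form has a fixed local model; for an adapted $J$ matching that model on $\mathbb{R}\times U_i$, explicit asymptotic analysis in the spirit of Hofer-Wysocki-Zehnder produces an embedded half-cylinder $u_i\colon [0,\infty)\times S^1 \to \mathbb{R}\times U_i$ that is positively asymptotic to $e_i$ and whose projection $\pi\circ u_i$ sweeps a prescribed one-sided collar of $e_i$ in $M$. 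Choosing the sides appropriately for each $i$ yields two families of half-cylinders, one realizing a collar of each $e_i$ in $R_+(\Sigma_g)$ and the other in $R_-(\Sigma_g)$.

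Removing the interiors of these collars from $R_\pm(\Sigma_g)$ leaves compact $2$-dimensional Liouville domains $(R'_\pm,\beta_\pm)$; such Liouville structures exist because the positive and negative regions of a convex surface are Liouville hypersurfaces in the sense of Section \ref{subsec:liouville}. Applying Lemma \ref{lemma:weinstein-domains} to $(R'_\pm,\beta_\pm)$ with a Morse function $f_\pm$ having $\partial R'_\pm$ as a regular level set produces embedded holomorphic lifts $v_\pm(\mathbf{x})=(f_\pm(\mathbf{x}),0,\mathbf{x})$ for a compatible adapted $J$. In a small cylindrical neighborhood of each boundary circle of $R'_\pm$, both $v_\pm$ and the corresponding half-cylinder $u_i$ are constructed from standard local models, so one can arrange a single globally adapted $J$ on $\widehat{W}$ agreeing with each local choice and then glue to obtain an embedded holomorphic curve
\[
u_\pm \colon S^2 \setminus \{p_1,\ldots,p_{g+1}\} \to [0,\infty)\times M,
\]
after an $\mathbb{R}$-translation placing the image in the positive end. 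The domain is topologically a sphere with $g+1$ punctures because $R'_\pm$ is planar with $g+1$ boundary circles, and $\im(\pi\circ u_\pm)=R_\pm(\Sigma_g)$ by construction.

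Finally, the Fredholm index is computed from the standard formula for a finite-energy curve in a $4$-dimensional symplectization,
\[
\ind(u_\pm) = -\chi(\dot\Sigma) + 2c_1(u_\pm^*\xi) + \sum_{i=1}^{g+1}\mu_{CZ}(e_i),
\]
with $-\chi(\dot\Sigma)=g-1$, $\sum_i \mu_{CZ}(e_i)=g+1$, and the $c_1$-contribution (evaluated against the $\Sigma_g$-framing of Lemma \ref{lemma:dividing-orbits}) cancelling the remaining $g$-dependence to yield $\ind(u_\pm)=2$. Fredholm regularity then follows from Wendl's automatic transversality criterion for embedded finite-energy curves in symplectizations of $3$-manifolds with only positive punctures at elliptic orbits. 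The main obstacle is the smoothing step at the $g+1$ boundary circles, where one must match two \emph{holomorphic} pieces rather than merely two $C^0$ surfaces; this forces a compatible choice of almost complex structure and of the individual pieces $u_i$ and $v_\pm$ from the outset. Because the $g+1$ collar regions are mutually disjoint, this compatibility can be arranged independently on each collar, so the genus-$1$ argument of \cite{menke2018jsj} localizes near each $e_i$ and extends verbatim to all $g\geq 1$.
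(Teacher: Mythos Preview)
Your proposal is correct and follows precisely the strategy the paper outlines in the paragraphs preceding the lemma: construct half-cylinders asymptotic to each $e_i$, lift the Weinstein core $R'_\pm$ via Lemma~\ref{lemma:weinstein-domains}, and glue, with the index and regularity following from the standard formula and Wendl's automatic transversality. The paper itself does not give an independent proof but defers the half-cylinder construction and gluing to \cite{menke2018jsj}, noting that the argument is local near each $e_i$ and hence genus-independent---exactly the point you make in your final sentence.
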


The same holomorphic half-cylinder strategy is used in \cite{menke2018jsj} to prove the next result that we will need.  Because $\Sigma_g$ is a splitting surface, it admits collections of bypasses $\D_+$ and $\D_-$ from opposite sides, and Lemma \ref{lemma:description-of-orbits} describes the orbits that appear in a neighborhood $N(\Sigma_g\cup \D_+\cup \D_-)$.  Specifically, Lemma \ref{lemma:description-of-orbits} gives a list of relevant orbits in $N(\Sigma_g\cup\D_+)$, and produces a corresponding list in $N(\Sigma_g\cup\D_-)$.  We distinguish the orbits in $N(\Sigma_g\cup\D_-)$ from those in $N(\Sigma_g\cup\D_+)$ with a prime (e.g., $\overline{e}_i'$ instead of $\overline{e}_i$).  Some of these orbits are represented diagrammatically in Figure \ref{fig:double-neighborhood-orbits}.  In Lemma \ref{lemma:description-of-orbits}, the attachment of the bypass $D^+_i$ was accomplished by attaching the contact handles $N_1^i$ and $N_2^i$; we use the handles $(N_2^i)'$ and $(N_1^i)'$ to attach $D^-_i$.  The same approach used to prove Lemma \ref{lemma:lifts} produces a collection of holomorphic curves which project to $N(\Sigma_g\cup \D_+\cup \D_-)$ and will be useful to us in constructing our 1-parameter family.

\begin{lemma}[{\cite[Lemma 3.6]{menke2018jsj}}]\label{lemma:walls}
For $i=1,\ldots,g$, there are embedded holomorphic curves
\[
\bar{u}_{\pm,i},\bar{u}'_{\pm,i}\colon S^2\setminus\{p_1,\ldots,p_{g+2}\}\to [0,\infty)\times N(\Sigma_g\cup \D_+\cup \D_-)
\]
and
\[
\tilde{u}_{\pm,i},\tilde{u}'_{\pm,i}\colon S^2\setminus\{p_1,\ldots,p_{g+1}\}\to [0,\infty)\times N(\Sigma_g\cup \D_+\cup \D_-),
\]
all Fredholm regular of index 2, all positively asymptotic to $\tilde{e}_1,\ldots,\tilde{e}_{i-1},e_{i+1},\ldots,e_g$, and additionally
\begin{enumerate}
	\item $\bar{u}_{\pm,i}$ is positively asymptotic to $e_i,\bar{e}_i$, and $e_{g+1}^{i+1}$;
	\item $\bar{u}'_{\pm,i}$ is positively asymptotic to $e_i,\bar{e}'_i$, and $(e_{g+1}^{i+1})'$;
	\item $\tilde{u}_{\pm,i}$ is positively asymptotic to $\tilde{e}_i$ and $e_{g+1}^{i+1}$;
	\item $\tilde{u}'_{\pm,i}$ is positively asymptotic to $\tilde{e}'_i$ and $(e_{g+1}^{i+1})'$.
\end{enumerate}
Curves with the same asymptotic ends are distinguished by whether their projections to $\Sigma_g\subset M$ agree with that of $R_+(\Sigma_g)$ or $R_-(\Sigma_g)$.
\end{lemma}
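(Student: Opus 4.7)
The plan is to obtain each of the eight families of curves by the same half-cylinder gluing strategy used to prove Lemma~\ref{lemma:lifts}, applied to appropriate subsurfaces of $N(\Sigma_g\cup \D_+\cup \D_-)$ identified in Lemma~\ref{lemma:description-of-orbits}. The first step is to match each family with the convex surface whose positive or negative region it should lift. The $g+2$ positive ends of $\bar u_{\pm,i}$ are precisely the dividing orbits $\tilde e_1,\ldots,\tilde e_{i-1},\,e_i,\ldots,e_g,\,e^{i+1}_{g+1},\,\bar e_i$ of $\Sigma^i_{g+1}$, while the $g+1$ positive ends of $\tilde u_{\pm,i}$ are the dividing orbits of $\Sigma^i_g$. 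The primed curves are attached analogously to surfaces $(\Sigma^i_{g+1})'$ and $(\Sigma^i_g)'$ built from the handles $(N^i_2)'$ and $(N^i_1)'$ on the $\D_-$ side.

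For each such surface $S$, the regions $R_\pm(S)$ are planar, and with respect to the contact form fixed in Lemma~\ref{lemma:description-of-orbits} they carry the structure of a $2$-dimensional Weinstein domain. After deleting small collar neighborhoods of the dividing orbits, Lemma~\ref{lemma:weinstein-domains} produces a holomorphic lift $u_0\colon R'_\pm\to\mathbb{R}\times M$; part~(v) of Lemma~\ref{lemma:description-of-orbits} supplies the Reeb transversality needed for this lift and selects which side of $\Sigma_g$ each of the $\pm$ variants maps to. To each boundary orbit $\gamma$ of $R'_\pm$ we attach a holomorphic half-cylinder $[0,\infty)\times S^1\to\mathbb{R}\times M$ positively asymptotic to $\gamma$; this is possible because every orbit in question is non-degenerate elliptic of Conley--Zehnder index $1$ by Lemma~\ref{lemma:description-of-orbits}(iv). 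Gluing the half-cylinders to $u_0$ along the deleted collars, exactly as in the proof of Lemma~\ref{lemma:lifts} in \cite{menke2018jsj}, yields the desired embedded holomorphic curve with the prescribed positive ends.

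The Fredholm index computation is a routine application of the standard formula to a genus-$0$ curve all of whose positive punctures are elliptic of CZ index $1$; it returns $2$ in every case. Fredholm regularity follows by automatic transversality for somewhere-injective, embedded index-$2$ curves with only elliptic ends in a $4$-dimensional symplectization, exactly as in \cite{menke2018jsj}. Embeddedness is preserved by the gluing because $u_0$ is embedded, the half-cylinders are embedded at distinct orbits, and the gluing is supported in disjoint neighborhoods. The distinction between curves sharing the same asymptotic data but different $\pm$ labels is tautologically the choice of region $R_+(S)$ versus $R_-(S)$ lifted in the first step.

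The main obstacle I anticipate is purely bookkeeping: keeping straight which of $\Sigma^i_g, \Sigma^i_{g+1}, (\Sigma^i_g)', (\Sigma^i_{g+1})'$ produces which curve, and ruling out the possibility that some other Reeb orbit of $N$ could appear as an additional negative or positive end of the glued curves. This last point is handled by the action estimate in part~(\ref{part:action-bound}) of Lemma~\ref{lemma:description-of-orbits}: any other Reeb orbit meeting $N$ has action so large that no pseudoholomorphic curve in the symplectization can have it as a negative end when its positive ends are drawn from the list above. Once the combinatorial matching is fixed, the analytic steps are genuinely identical to those appearing in the genus-one case of \cite{menke2018jsj}.
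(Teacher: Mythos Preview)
Your proposal is correct and matches the paper's approach exactly: the paper does not give an independent proof of this lemma but simply states that ``the same approach used to prove Lemma~\ref{lemma:lifts}'' produces these curves, and your write-up is precisely that approach---lift $R_\pm$ of the appropriate intermediate convex surface via Lemma~\ref{lemma:weinstein-domains} and glue half-cylinders at the elliptic dividing orbits. Your identification of which surface ($\Sigma^i_{g+1}$, $\Sigma^i_g$, or their primed analogues) supplies each curve is correct; the only superfluous step is invoking the action bound to rule out extra ends, since the lifted curves have no negative ends by construction.
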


The holomorphic curves given by Lemma \ref{lemma:walls} serve as ``walls" between the contact handles that have been attached to $\Sigma_g$ and will be used to enumerate certain holomorphic curves appearing in the symplectization $\mathbb{R}\times M$.  Some of these walls are depicted as heavily shaded curves in in Figure \ref{fig:neighborhood-orbits}.\\

\begin{figure}
\centering\Large
\def\svgwidth{0.7\linewidth}
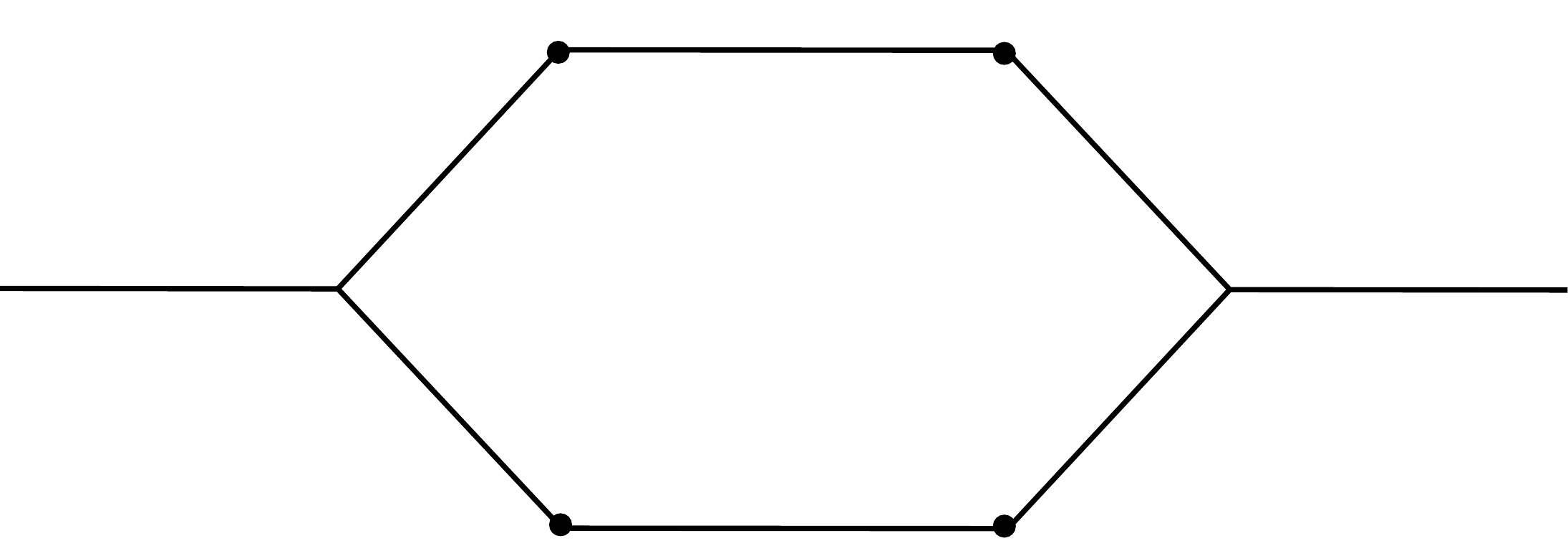
\caption{Orbits in the neighborhood $N(\Sigma_g\cup D^+_1\cup D^-_1)$.  The heavily shaded curves represent some of the walls identified in Lemma \ref{lemma:walls}.}
\label{fig:double-neighborhood-orbits}
\end{figure}

Let $\mathcal{M}(e_1,\ldots,e_{g+1})$ be the index-2 moduli space of curves $u\colon S^2\setminus\{p_1,\ldots,p_{g+1}\}\to\mathbb{R}\times M$ which are positively asymptotic to $e_1,\ldots,e_{g+1}$ and homologous to either $u_+$ or $u_-$.  This space admits an obvious translation action by $\mathbb{R}$, and the following lemma describes the compactification of $\mathcal{M}(e_1,\ldots,e_{g+1})/\mathbb{R}$.

\begin{lemma}\label{lemma:boundary-curves}
The compactification $\overline{\mathcal{M}(e_1,\ldots,e_{g+1})/\mathbb{R}}$ contains a pair of closed intervals $\mathcal{N}_{\pm}$ such that
\begin{enumerate}
	\item $\mathcal{N}_{\pm}$ contains the equivalence class of $u_{\pm}$;
	\item the boundary $\partial\mathcal{N}_{\pm}$ contains a two-level holomorphic building with top level $v_{1,\pm}$ a cylinder positively asymptotic to $e_{g+1}$ and negatively asymptotic to $h^1_{g+1}$, and with bottom level $v_{0,\pm}$ positively asymptotic to $e_1,\ldots,e_g,h^1_{g+1}$;
	\item the other boundary element of $\partial\mathcal{N}_{\pm}$ is a two-level holomorphic building with top level $v'_{1,\pm}$ a cylinder positively asymptotic to $e_{g+1}$ and negatively asymptotic to $(h^1_{g+1})'$, and with bottom level $v'_{0,\pm}$ positively asymptotic to $e_1,\ldots,e_g,(h^1_{g+1})'$.
\end{enumerate}
\end{lemma}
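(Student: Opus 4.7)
The plan is to study the $1$-dimensional moduli space $\mathcal{M}(e_1,\ldots,e_{g+1})/\mathbb{R}$ via SFT compactness, using the walls from Lemma \ref{lemma:walls} and the action bounds from Lemma \ref{lemma:description-of-orbits} to control the topology of its boundary. Because $u_\pm$ are Fredholm regular of index $2$ (Lemma \ref{lemma:lifts}), the quotient $\mathcal{M}/\mathbb{R}$ is a smooth $1$-manifold near $[u_\pm]$. Let $\mathcal{N}_\pm$ denote the connected component of the compactification $\overline{\mathcal{M}/\mathbb{R}}$ containing $[u_\pm]$; by SFT compactness, applied as in the genus-one case of \cite{menke2018jsj}, each $\mathcal{N}_\pm$ is a compact $1$-manifold with boundary whose boundary points are represented by multi-level broken holomorphic buildings.

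Next I would identify the admissible breaking orbits. Any building in $\partial\mathcal{N}_\pm$ has positive ends $e_1,\ldots,e_{g+1}$, represents the homology class of $u_\pm$, and breaks along Reeb orbits contained in $N(\Sigma_g\cup\D_+\cup\D_-)$. Part (\ref{part:action-bound}) of Lemma \ref{lemma:description-of-orbits} eliminates all high-action orbits as well as $\bar{e}_i$ and $\tilde{e}_i$ as candidates, leaving only $e_i$, $h^i_{g+1}$, $\tilde{h}_i$ and their primed counterparts. A Conley--Zehnder-index count then rules out breaking at the elliptic orbits $e_i$ in a generic one-parameter family, so the hyperbolic orbits $h^1_{g+1}$ and $(h^1_{g+1})'$ are the only possible breaking orbits consistent with the index totaling $2$.

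The third step is to pin down the actual buildings using positivity of intersection with the walls from Lemma \ref{lemma:walls}. The curves $\bar{u}_{\pm,i},\tilde{u}_{\pm,i}$ and their primed versions foliate regions of $[0,\infty)\times M$ corresponding to the various contact handles attached to $\Sigma_g$, and any curve in $\mathcal{M}$ must intersect these walls in a prescribed combinatorial pattern. A positivity-of-intersection analysis along the lines of the genus-one argument in \cite{menke2018jsj} then forces the top level of a boundary breaking to be a cylinder from $e_{g+1}$ to $h^1_{g+1}$ (resp.\ to $(h^1_{g+1})'$) together with trivial cylinders over $e_1,\ldots,e_g$, and the bottom level to be a planar curve positively asymptotic to $e_1,\ldots,e_g,h^1_{g+1}$ (resp.\ $e_1,\ldots,e_g,(h^1_{g+1})'$).

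Finally I would exhibit both buildings as actual boundary points of $\mathcal{N}_\pm$, so that each $\mathcal{N}_\pm$ is an interval rather than a circle. The buildings can be constructed explicitly by combining the bypass attached via $D^+_1$ or $D^-_1$, whose Reeb dynamics produce the desired cylinder from $e_{g+1}$ to the relevant hyperbolic orbit, with a Weinstein lift of the modified positive region in the manner of Lemma \ref{lemma:weinstein-domains}; a standard SFT gluing argument then places these buildings in $\overline{\mathcal{M}/\mathbb{R}}$ as the two boundary points of the one-parameter family through $[u_\pm]$. The main obstacle will be the intersection-theoretic analysis of the third step, where one must rule out every other admissible degeneration; in the higher-genus setting the walls from Lemma \ref{lemma:walls} have a richer combinatorial structure than in the torus case, and this must be carefully accounted for.
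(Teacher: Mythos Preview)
Your overall strategy---SFT compactness, walls via positivity of intersections, then constraints on the breaking orbits---matches the paper's, but there is a genuine gap in your second and third steps: you are missing the homological argument that does the real work, and the substitutes you propose are not strong enough.

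Specifically, your Conley--Zehnder count cannot single out $h^1_{g+1}$ and $(h^1_{g+1})'$ among the hyperbolic orbits. All of $h^i_{g+1}$, $\tilde{h}_i$, and their primed versions have $\mu_{CZ}=0$, so an index-$2$ building could in principle break at any of them. Likewise, part~(\ref{part:action-bound}) of Lemma~\ref{lemma:description-of-orbits} only says $\mathcal{A}(\bar e_j),\mathcal{A}(\tilde e_j)$ exceed the actions of individual orbits $e_i,h^i_{g+1},\tilde h_i$; it does not force $\mathcal{A}(\bar e_j)$ to exceed the \emph{total} action $\sum_i\mathcal{A}(e_i)$ of the positive ends, so action alone does not eliminate $\bar e_i$ or $e^{j}_{g+1}$ as possible negative asymptotics. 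Indeed, the paper explicitly considers the candidate $w_k^-=(w_k^+\setminus\{e_{g+1}\})\cup\{\bar e_1,e^2_{g+1}\}$ and can rule it out by action only after homology has narrowed the possibilities to this single combination.

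What the paper actually does is compute, in $H_1(N(\Sigma_g\cup\D_+\cup\D_-))\cong H_1(\Sigma_g)\cong\mathbb{Z}^{2g}$ with basis $[e_1],\ldots,[e_g],[b_1],\ldots,[b_g]$, the classes of all the orbits appearing in Lemma~\ref{lemma:description-of-orbits} (these are the formulas in equation~\eqref{eq:new-homology-classes}). The walls confine the projection of each level $w_i$ to a single handle $N_1^j$ or $(N_2^j)'$, which restricts $w_i^\pm$ to short explicit lists of orbits; then the constraint $[w_i^+]=[w_i^-]$, read off from~\eqref{eq:new-homology-classes}, forces $j=1$ and pins the breaking orbit to $h^1_{g+1}$ or $(h^1_{g+1})'$. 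Only after this does action finish the job. You should replace your index argument with this homology computation; without it, steps two and three do not go through.
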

\begin{proof}
We assume that $\mathcal{A}(e_1)=\mathcal{A}(e_2)=\cdots=\mathcal{A}(e_{g+1})$; we will use this action information as well as a description of the homology classes of the relevant curves to determine $\partial\mathcal{N}_{\pm}$.  Consider
\[
H_1(N(\Sigma_g\cup \mathbf{D}_+\cup \mathbf{D}_-)) \simeq H_1(\Sigma_g) \simeq \mathbb{Z}^{2g},
\]
and notice that we may choose curves $b_1,\ldots,b_g\subset\Sigma_g$ so that $[e_1],\ldots,[e_g],[b_1],\ldots,[b_g]$ forms a basis for $H_1(\Sigma_g)$.  Moreover, the curve $b_i$ is chosen so that if the attaching arc $\alpha_i^\pm$ is joined with (a subarc of) the arc $a_i$ identified in the definition of a splitting surface, then the resulting closed curve is homologous to $b_i$.  After orienting the curves $b_1,\ldots,b_g$, we compute the following homology classes\footnote{The curves $b_1,\ldots,b_g$ are not canonically oriented, but we fix their orientations according to equation \ref{eq:new-homology-classes}.}
\begin{equation}\label{eq:new-homology-classes}
[\tilde{e}_i] = [e_i]-\sum_{k=1}^{g-i}[b_{i+k}],
\quad
[e^i_{g+1}] = [e^{i-1}_{g+1}] + \sum_{k=1}^{g-i}[b_{i+k}],
\quad\text{and}\quad
[\overline{e}_i]=[b_i],
\end{equation}
where $e^1_{g+1}:=e_{g+1}$.  The equation on the left is valid for $1\leq i\leq g$, the right is valid for $2\leq i\leq g+1$, and we recall that $\tilde{e}_{g+1}=e^{g+1}_{g+1}$.  Similarly,
\begin{equation}
[\tilde{e}'_i] = [e_i]+\sum_{k=1}^{g-i}[b_{i+k}],
\quad
[(e^i_{g+1})'] = [(e^{i-1}_{g+1})'] + \sum_{k=1}^{g-i}[b_{i+k}],
\quad\text{and}\quad
[\overline{e}'_i]=-[b_i],
\end{equation}
with the same conventions.  Of course $[h^1_{g+1}]=[(h^1_{g+1})']=[e_{g+1}]$, while $[h^i_{g+1}]=[e^i_{g+1}]$ and $[(h^i_{g+1})']=[(e^i_{g+1})']$ for $2\leq i\leq g$.  We also have $[\tilde{h}_i]=[\tilde{e}_i]$ for $1\leq i\leq g$.  Now suppose we have a $(k+1)$-level holomorphic building $w_k\cup w_{k-1}\cup\cdots\cup w_0$ in $\partial\mathcal{N}_{\pm}$, with top level $w_k$ and bottom level $w_0$.  Let $w^+_i$ and $w^-_i$ denote the sets of Reeb orbits to which $w_i$ is positively and negatively asymptotic, respectively.  We denote by $\mathcal{A}(w^{\pm}_i)$ the sum of the $\alpha$-actions of the Reeb orbits in $w^{\pm}_i$ and by $[w^{\pm}_i]$ the sum of their homology classes.  Of course we must have $\mathcal{A}(w^-_i)<\mathcal{A}(w^+_i)$ and $[w^-_i]=[w^+_i]$.  We also point out that the curves $\overline{u}_{\pm,i}, \overline{u}'_{\pm,i}, \tilde{u}_{\pm,i}$, and $\tilde{u}'_{\pm,i}$ are all disjoint from the curves $u_{\pm}$ and hence, by the positivity of intersections, from our holomorphic building.  In particular, these curves are disjoint from each level $w_i$.  Moreover, the projections of these curves to $M$ remain disjoint, so for each $i$, the image of $\pi\circ w_i$ is contained in a neighborhood $N_1^j$ or $(N_2^j)'$, for some $j$.\\

Now because $u_{\pm}$ is positively asymptotic to $e_1,e_2,\ldots,e_{g+1}$ we know that
\[
w^+_k \subseteq \{e_1,e_2,\ldots,e_{g+1}\}.
\]
We now consider the neighborhoods $N_1^j$ or $(N_2^j)'$ in which $\pi\circ w_k$ might land.  First, suppose that $\pi\circ w_k\subset N_1^j$ for some $j>2$.  Because $\Sigma^{j-1}_g$ meets $\Sigma_g$ in the curves $e_j,\ldots,e_g$, we have
\[
w_k^+ \subset \{e_j,\ldots,e_g\}
\quad\text{and}\quad
w_k^- \subset \{e_j,\ldots,e_g,\overline{e}_j,e^j_{g+1},h^j_{g+1},e^{j+1}_{g+1}\}.
\]
The action bounds of Lemma \ref{lemma:description-of-orbits} allow us to exclude other curves from $w_k^-$.  From equation \ref{eq:new-homology-classes} we see that the homological requirement $[w_k^+]=[w_k^-]$ can only be satisfied if we have $w_k^+=w_k^-$, and this of course violates the action requirement $\mathcal{A}(w_k^+)>\mathcal{A}(w_k^-)$.  We conclude that $\pi\circ w_k$ cannot be contained in $N_1^j$ if $j>1$.  A completely analogous argument shows that $\pi\circ w_k$ cannot be contained in $(N_2^j)'$ when $j>1$.\\

So $\pi\circ w_k$ is contained in either $N_1^1$ or $(N_2^1)'$.  In the first case we see that
\[
w_k^+ \subset \{e_1,\ldots,e_{g+1}\}
\quad\text{and}\quad
w_k^- \subset \{e_1,\ldots,e_{g+1},\overline{e}_1,h^1_{g+1},e^2_{g+1}\},
\]
again using the action bounds of Lemma \ref{lemma:description-of-orbits}.  The homological requirement $[w_k^+]=[w_k^-]$ then leads us to
\[
w_k^- = (w_k^+\setminus\{e_{g+1}\})\cup\{h^1_{g+1}\}
\quad\text{or}\quad
w_k^- = (w_k^+\setminus\{e_{g+1}\})\cup\{\overline{e}_1,e^2_{g+1}\}.
\]
The latter case is ruled out by part \ref{part:action-bound} of Lemma \ref{lemma:description-of-orbits} and the fact that $\mathcal{A}(w_k^-)<\mathcal{A}(w_k^+)$.  So if $\pi\circ w_k$ is contained in $N_1^1$, then $w_k^-=(w_k^+\setminus\{e_{g+1}\})\cup\{h^1_{g+1}\}$, and similarly if $\pi\circ w_k$ is contained in $(N_2^j)'$, then $w_k^-=(w_k^+\setminus\{e_{g+1}\})\cup\{(h^1_{g+1})'\}$.\\

An important observation at this point is that $w_k^-$ contains either $h^1_{g+1}$ or $(h^1_{g+1})'$, and thus so does $w_{k-1}^+$.  As with $\pi\circ w_k$, $\pi\circ w_{k-1}$ must be contained in a neighborhood of the form $N_1^j$ or $(N_2^j)'$.  Indeed, if $h^1_{g+1}\in w_{k-1}^+$, then $\pi\circ w_{k-1}\subset N_1^1$ and if $(h^1_{g+1})'\in w_{k-1}^+$, then $\pi\circ w_{k-1}\subset (N_2^1)'$.  We now consider these two cases.\\

If $\pi\circ w_{k-1}$ is contained in $N_1^1$ then
\[
w^-_{k-1}\subseteq \{e_1,\ldots,e_{g+1},\overline{e}_1,h^1_{g+1},e^2_{g+1}\}.
\]
Now $w^+_{k-1}$ must contain $h^1_{g+1}$, must be homologous to $w^-_{k-1}$, and must satisfy $\mathcal{A}(w^-_{k-1})<\mathcal{A}(w^+_{k-1})$.  The first two conditions are satisfied if
\[
w^+_{k-1} = \{e_1,\ldots,e_g,h^1_{g+1}\}
\quad\text{and}\quad
w^-_{k-1} = \emptyset
\]
or if
\[
w^+_{k-1} = \{h^1_{g+1}\} \cup \bar{w}
\quad\text{and}\quad
w^-_{k-1} = \{\bar{e}_1,e^2_{g+1}\} \cup \bar{w}
\]
for some $\bar{w}\subseteq \{e_1,\ldots,e_{g}\}$.  However, the latter case is prohibited by the action bound, so we conclude that $w^-_{k-1}=\emptyset$, meaning that our building has height two.  All that remains is to verify that the top level of our building is a cylinder.  To see that this is the case, notice that $w_{k-1}$ must be connected, since $w_{k-1}^+=\{e_1,\ldots,e_g,h^1_{g+1}\}$ and the only null-homologous combination of these positive ends is $e_1+\cdots+e_g+h^1_{g+1}$.  So if $w_k$ has more than one negative end, then the building $w_k\cup w_{k-1}$ has nonzero genus.  Of course this is impossible, since all of the curves in $\mathcal{M}(e_1,\ldots,e_{g+1})/\mathbb{R}$ are planar.  So $w_k$ is a cylinder with positive end $e_{g+1}$ and negative end $h^1_{g+1}$, as desired.\\

If instead the image of $\pi\circ w_{k-1}$ is contained in $(N_2^1)'$, then the same considerations lead us to conclude that $w_k$ is a cylinder with positive end $e_{g+1}$ and negative end $(h^1_{g+1})'$, and that $w_{k-1}$ is positively asymptotic to $e_1,\ldots,e_g,h^1_{g+1}$, with no negative ends.  We thus define $v_{0,\pm}=w_{k-1}$ and $v_{1,\pm}=w_k$ in the case that $\pi\circ w_{k-1}$ is contained in $N_1^1$ and define $v'_{0,\pm}=w_{k-1}$ and $v'_{1,\pm}=w_{k}$ in the case that $\pi\circ w_{k-1}$ is contained in $(N_2^1)'$.
\end{proof}

Now let $\mathcal{M}_{\widehat{W}}(e_1,\ldots,e_g,h^1_{g+1})$ be the index-1 moduli space of holomorphic curves in $\widehat{W}$ which are positively asymptotic to $e_1,\ldots,e_g,h^1_{g+1}$ and represent the same homology class as $v_{0,+}$ or $v_{0,-}$, the curves identified (up to translation) in Lemma \ref{lemma:boundary-curves}.  The following lemma will allow us to use this moduli space to interpolate between $v_{0,+}$ and $v_{0,-}$, producing what will serve as the middle part of our 1-parameter family.

\begin{lemma}\label{lemma:middle-interval}
One component of the compactification $\overline{\mathcal{M}_{\widehat{W}}(e_1,\ldots,e_g,h^1_{g+1})}$ is a closed interval $I$ with $\partial I=\{v_{0,+},v_{0,-}\}$.
\end{lemma}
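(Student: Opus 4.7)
The plan is to run the same SFT compactness and enumeration argument as in the proof of Lemma \ref{lemma:boundary-curves}, but now inside the index-$1$ moduli space $\mathcal{M}_{\widehat{W}}(e_1,\ldots,e_g,h^1_{g+1})$ of curves in the filling $\widehat{W}$ rather than in the symplectization.

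First, I would verify that $v_{0,+}$ and $v_{0,-}$ are Fredholm regular. They are somewhere injective and genus zero, and by Lemma \ref{lemma:description-of-orbits} their asymptotic orbits are non-degenerate with Conley-Zehnder indices $0$ (for $h^1_{g+1}$) and $1$ (for the $e_i$'s); automatic transversality in the form of Wendl's criterion then applies and guarantees that $\mathcal{M}_{\widehat{W}}(e_1,\ldots,e_g,h^1_{g+1})$ is a smooth $1$-manifold in a neighborhood of each. Let $I$ denote the connected component of $\overline{\mathcal{M}_{\widehat{W}}(e_1,\ldots,e_g,h^1_{g+1})}$ containing $v_{0,+}$.

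Next, SFT gluing attaches the cylinder $v_{1,\pm}$ to $v_{0,\pm}$ along $h^1_{g+1}$ to produce a smooth $1$-parameter family in $\mathcal{M}(e_1,\ldots,e_{g+1})/\mathbb{R}$ converging to the broken building $v_{0,\pm}\cup v_{1,\pm}$. By the uniqueness clause of SFT gluing, this family agrees with an end of $\mathcal{N}_\pm$ from Lemma \ref{lemma:boundary-curves}. In particular, $v_{0,\pm}$ sits on the boundary of $\overline{\mathcal{M}_{\widehat{W}}(e_1,\ldots,e_g,h^1_{g+1})}$: any attempt to continue the $1$-parameter family past $v_{0,\pm}$ is obstructed by the presence of the cylindrical breaking attaching $v_{1,\pm}$.

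The central step is then the SFT compactness analysis. I would follow a smooth $1$-parameter family of curves in $\mathcal{M}_{\widehat{W}}(e_1,\ldots,e_g,h^1_{g+1})$ starting at $v_{0,+}$, and argue it can only terminate at $v_{0,-}$. Any accumulation point is a holomorphic building whose bottom level lives in $\widehat{W}$ and whose upper levels live in $\mathbb{R}\times M$. The homology-and-action bookkeeping of Lemma \ref{lemma:boundary-curves} carries over verbatim: the walls supplied by Lemma \ref{lemma:walls} together with positivity of intersections confine each level's projection to $M$ to a single handle $N_1^j$ or $(N_2^j)'$; Lemma \ref{lemma:description-of-orbits}(\ref{vaugon-action-bounds}) rules out all large-action orbits; and the homology relations of \eqref{eq:new-homology-classes} combined with $[w^+]=[w^-]$ and $\mathcal{A}(w^-)<\mathcal{A}(w^+)$ force any non-trivial top level to be the cylinder $v'_{1,\pm}$ from $e_{g+1}$ to $(h^1_{g+1})'$, whose unique compatible bottom level has the asymptotics and homology class of $v_{0,-}$. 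A $\mathbb{Z}/2$ intersection-parity argument with the wall curves of Lemma \ref{lemma:walls} distinguishes $v_{0,+}$ from $v_{0,-}$ and thereby excludes the alternative that $I$ is a circle.

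The main obstacle is the enumeration of multi-level buildings in the last step, which demands the same careful case-by-case analysis as the proof of Lemma \ref{lemma:boundary-curves}; the strict action separation in Lemma \ref{lemma:description-of-orbits}(\ref{vaugon-action-bounds}) is what ultimately makes this enumeration finite and forces the unique non-trivial limit to be $v_{0,-}$, so that $I$ is a closed interval with $\partial I=\{v_{0,+},v_{0,-}\}$.
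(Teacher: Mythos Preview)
Your compactness analysis misidentifies the limiting buildings. You claim that any non-trivial top level must be ``the cylinder $v'_{1,\pm}$ from $e_{g+1}$ to $(h^1_{g+1})'$,'' but curves in $\mathcal{M}_{\widehat{W}}(e_1,\ldots,e_g,h^1_{g+1})$ have positive ends $e_1,\ldots,e_g,h^1_{g+1}$---the orbit $e_{g+1}$ is \emph{not} among them, so no symplectization level can be positively asymptotic to $e_{g+1}$. What the paper actually shows is different: for the topmost symplectization level $w$, the homology and action constraints force $w^-=\emptyset$. Thus the limit is a height-one building consisting of a single curve that has escaped entirely into the cylindrical end; that curve is positively asymptotic to $e_1,\ldots,e_g,h^1_{g+1}$ with no negative ends, and Lemma~\ref{lemma:boundary-curves} already produced exactly two such curves, namely $v_{0,+}$ and $v_{0,-}$. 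No two-level breaking with a cylinder over $e_{g+1}$ ever occurs here.

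A second gap: you confine each level's projection to a handle of type $N_1^j$ or $(N_2^j)'$, carrying over the dichotomy from Lemma~\ref{lemma:boundary-curves} verbatim. But now $h^1_{g+1}$ is a positive end and lies inside $N_1^1$, so a priori the projection could land in the adjacent $2$-handle $N_2^j$ (or, symmetrically, $(N_1^j)'$). The paper treats these cases explicitly: if $\pi\circ w\subset N_2^j$ then $w^+\subset\{e_1,\ldots,e_g\}$ and $w^-\subset\{e_1,\ldots,e_g,\overline{e}_j,\tilde{e}_j,\tilde{h}_j,e^{j+1}_{g+1}\}$, and the homology relations \eqref{eq:new-homology-classes} force $w^+=w^-$, contradicting $\mathcal{A}(w^-)<\mathcal{A}(w^+)$. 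Without this step your enumeration is incomplete. Your regularity and gluing remarks are fine, but they are not where the content of the lemma lies.
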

\begin{proof}
We denote $\mathcal{M}_{\widehat{W}}(e_1,\ldots,e_g,h^1_{g+1})$ by $\mathcal{M}_{\widehat{W}}$ and investigate the objects that could appear in the boundary of the compactification of $\mathcal{M}_{\widehat{W}}$.  Because this is an index-1 family, the compactification will not contain any nodal curves, and the only possible boundary elements are holomorphic buildings in the symplectization end of $\widehat{W}$.  Suppose we have such a building, and let
\[
w\colon S^2\setminus\{p_1,\ldots,p_k\}\to\mathbb{R}\times M
\]
be its topmost level.  As in the proof of Lemma \ref{lemma:boundary-curves}, the curves $\bar{u}_{\pm,i},\bar{u}'_{\pm,i},\tilde{u}_{\pm,i}$, and $\tilde{u}'_{\pm,i}$ are all disjoint from elements of $\mathcal{M}_{\widehat{W}}$ and hence, by the positivity of intersections, from $w$.  So the image of the projection $\pi\circ w$ must be contained in one of the neighborhoods $N_1^j,N_2^j,(N_1^j)',(N_2^j)'$ identified above.  We claim that this is only possible if $w$ is positively asymptotic to $e_1,\ldots,e_g,h^1_{g+1}$ and has no negative ends.\\

We first show that $\pi\circ w$ cannot be contained in a neighborhood of the form $N_2^j$ or $(N_1^j)'$.  To this end, suppose that $\pi\circ w$ is contained in $N_2^j$.  Then
\[
w^+ \subset\{e_1,\ldots,e_g\}
\quad\text{and}\quad
w^- \subset\{e_1,\ldots,e_g,\overline{e}_j,\tilde{e}_j,\tilde{h}_j,e^{j+1}_{g+1}\}.
\]
But the homology classes computed in equation \ref{eq:new-homology-classes} tell us that curves chosen in this way can only satisfy $[w_k^+]=[w_k^-]$ if in fact $w_k^+=w_k^-$.  Of course this violates the inequality $\mathcal{A}(w_k^+)>\mathcal{A}(w_k^-)$, and we see that $\pi\circ w$ cannot be contained in $N_2^j$ for any $j$.  The same reasoning shows that $\pi\circ w$ also cannot be contained in a neighborhood of the form $(N_1^j)'$.\\

Just as in the proof of Lemma \ref{lemma:boundary-curves}, the projection $\pi\circ w$ of the topmost level $w$ cannot be contained in $N^j_1$ or $(N_2^j)'$ if $j>1$.  These leaves two possibilities --- either $\pi\circ w$ is contained in $N_1^1$, or in $(N_2^1)'$ --- which we now consider.\\

Suppose that the image of $\pi\circ w$ is contained in $N_1^1$, meaning that
\[
w^+\subseteq \{e_1,\ldots,e_g,h^1_{g+1}\}
\quad\text{and}\quad
w^-\subseteq \{e_1,\ldots,e_{g+1},h^1_{g+1},\bar{e}_1,e^2_{g+1}\}.
\]
Again we must have $[w^+]=[w^-]$.  Because we could have $[w^+]=0$, it is possible that $w^-$ is empty, and we have a holomorphic building of height one.  Suppose this is not the case.  Because $[h^1_{g+1}]=[e^1_{g+1}]+[\overline{e}_1]=[e_{g+1}]$, one homological possibility is that as we move from $w^+$ to $w^-$ we replace the curve $h^1_{g+1}$ with $e_{g+1}$ or with $e^2_{g+1}$ and $\overline{e}_1$.  That is, if $w^-\neq\emptyset$, then either $w^+=w^-$,
\[
w^+=\{h^1_{g+1}\}\cup\overline{w}
\quad\text{and}\quad
w^-=\{e_{g+1}\}\cup\overline{w}
\]
for some $\overline{w}\subseteq\{e_1,\ldots,e_{g}\}$, or
\[
w^+=\{h^1_{g+1}\}\cup\overline{w}
\quad\text{and}\quad
w^-=\{e^2_{g+1},\overline{e}_1\}\cup\overline{w}.
\]
However, all of these possibilities are prohibited by the action requirement $\mathcal{A}(w^-)<\mathcal{A}(w^+)$.  The first possibility obviously violates this requirement, while the second and third do so because $\mathcal{A}(h^1_{g+1})<\mathcal{A}(e_{g+1})<\mathcal{A}(e^2_{g+1})+\mathcal{A}(\overline{e}_1)$.  From all of this we conclude that $w^-=\emptyset$ and thus $w$ cannot be the topmost level of a building of height greater than one.  The same reasoning shows that if $\pi\circ w$ is contained in $(N_2^1)'$ then $w^-=\emptyset$.  So in any case, $w^-$ is empty, and $\pi\circ w$ is contained in either $N_1^1$ or $(N_2^1)'$.  But if $\pi\circ w$ is contained in $(N_2^1)'$, then
\[
w^+\subseteq\{e_1,\ldots,e_{g}\},
\]
so we cannot have $[w^+]=0$.  So in fact the image of $\pi\circ w$ lies in $N_1^1$, and $w$ has no negative ends.\\

So  $w$ is a holomorphic curve in the symplectization end of $\widehat{W}$ positively asymptotic to $e_1,\ldots,e_{g},h^1_{g+1}$.  In Lemma \ref{lemma:boundary-curves} we showed that there are precisely two such curves --- $v_{0,+}$ and $v_{0,-}$ --- so $w$ must be one of these two.  We conclude that
\[
\partial\overline{\mathcal{M}_{\widehat{W}}(e_1,\ldots,e_g,h^1_{g+1})} = \{v_{0,+},v_{0,-}\}.
\]
So $\overline{\mathcal{M}_{\widehat{W}}(e_1,\ldots,e_g,h^1_{g+1})}$ contains the desired component $I$.
\end{proof}

\begin{lemma}\label{lemma:1-parameter-family}
There is a 1-parameter family
\[
\mathcal{S} = \{u_t\colon S^2\setminus\{p_1,\ldots,p_{g+1}\}\to \widehat{W}~|~du_t\circ j=J\circ du_t\}_{t\in\mathbb{R}}
\]
of embedded holomorphic curves in $(\widehat{W},\widehat{\omega})$ such that
\begin{enumerate}
	\item for $t\gg 0$, the images of $u_t$ and $u_{-t}$ are contained in the symplectization part of $\widehat{W}$;\label{property:symplectization-part}
	\item for $t\gg 0$, the image of $\pi\circ u_{\pm t}$ is $R_{\pm}(\Sigma_g)$, where $\pi\colon[0,\infty)\times M\to M$ is the obvious projection;\label{property:projections}
	\item the images of $u_{t_1}$ and $u_{t_2}$ are disjoint whenever $t_1\neq t_2$.\label{property:disjoint-images}
\end{enumerate}
\end{lemma}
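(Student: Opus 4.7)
The plan is to construct $\mathcal{S}$ by concatenating three pieces: the interval $\mathcal{N}_+$ from Lemma \ref{lemma:boundary-curves}, the interval $I$ from Lemma \ref{lemma:middle-interval}, and the interval $\mathcal{N}_-$ from Lemma \ref{lemma:boundary-curves}, with standard SFT gluing providing smooth transitions across the two broken holomorphic buildings that appear where these pieces meet. The two ends of $\mathcal{S}$ will be obtained by freely translating $u_+$ and $u_-$ in the symplectization end of $\widehat{W}$, giving the desired behavior for large $|t|$.

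Concretely, I would fix a large threshold $T > 0$ and define $u_t = \tau_{t-T} \circ u_+$ for $t \geq T$ and $u_t = \tau_{|t|-T} \circ u_-$ for $t \leq -T$, where $\tau_s$ denotes translation by $s$ in the $\mathbb{R}$-direction of the symplectization $[0,\infty) \times M \subset \widehat{W}$. These curves satisfy properties (\ref{property:symplectization-part}) and (\ref{property:projections}) for large $|t|$, since $u_\pm$ already live in the symplectization and project to $R_\pm(\Sigma_g)$ by Lemma \ref{lemma:lifts}, and both features are preserved under translation.

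To interpolate between $u_T = u_+$ and $u_{-T} = u_-$, I would trace a path through the compactified moduli spaces of Lemmas \ref{lemma:boundary-curves} and \ref{lemma:middle-interval}. Starting from $u_+$, move along $\mathcal{N}_+$ toward its boundary element $v_{1,+} \cup v_{0,+}$, simultaneously translating the curves downward so that the top level $v_{1,+}$ escapes to $+\infty$ while the bottom level $v_{0,+}$ settles inside $\widehat{W}$. Standard SFT gluing applied to this Fredholm-regular two-level building produces a smooth one-parameter family that transitions from $\mathcal{N}_+$ through the building into the interval $I \subset \overline{\mathcal{M}_{\widehat{W}}(e_1, \ldots, e_g, h^1_{g+1})}$. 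Trace $I$ from $v_{0,+}$ to $v_{0,-}$, then reverse the gluing process on the other side, passing through $v_{1,-} \cup v_{0,-}$ into $\mathcal{N}_-$ and on to $u_-$. The parameter $t \in (-T, T)$ is chosen so that the concatenation is monotone.

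The main obstacle is the SFT gluing at the two broken buildings: one must verify that near each of $v_{1,\pm} \cup v_{0,\pm}$, the compactified moduli space has the structure of a half-open interval on each side, and that these two open ends match up into a single smooth one-parameter family in $\widehat{W}$. This relies on the standard gluing theorems for pseudoholomorphic buildings whose breaking orbits $h^1_{g+1}$ and $(h^1_{g+1})'$ are nondegenerate and whose constituent curves are Fredholm regular, conditions guaranteed by Lemmas \ref{lemma:description-of-orbits}, \ref{lemma:boundary-curves}, and \ref{lemma:middle-interval}. Once gluing is established, property (\ref{property:disjoint-images}) follows from positivity of intersections applied along the family: distinct curves $u_{t_1}, u_{t_2} \in \mathcal{S}$ are homologous relative cycles in an index-$2$ moduli space, and any geometric intersection would contribute a positive term to their algebraic intersection number, which must remain zero throughout the smooth family since it vanishes at the ends where the curves are disjoint translates.
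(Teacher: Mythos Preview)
Your approach differs from the paper's in an instructive way. You propose to assemble $\mathcal{S}$ by concatenating the three intervals $\mathcal{N}_+$, $I$, and $\mathcal{N}_-$, with SFT gluing bridging across the two-level buildings at their shared endpoints. The paper takes a shortcut: it discards $\mathcal{N}_\pm$ entirely and builds $\mathcal{S}$ from $I$ alone. The ends of $\mathcal{S}$ are not translates of the original $u_\pm$ (which are asymptotic to $e_{g+1}$) but rather translates of $v_{0,\pm}$ (which are asymptotic to $h^1_{g+1}$). Every curve in the paper's family therefore lives in the single moduli space $\mathcal{M}_{\widehat{W}}(e_1,\ldots,e_g,h^1_{g+1})$, and no gluing across broken buildings is required. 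To reconcile the asymptotics with property~(\ref{property:projections}), the paper simply isotopes $\Sigma_g$ so that its dividing set passes through $h^1_{g+1}$ rather than $e_{g+1}$.

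Your route runs into trouble at the concatenation step. The curves in $\mathcal{N}_\pm$ are asymptotic to $e_{g+1}$, while those in $I$ are asymptotic to $h^1_{g+1}$; these are distinct Reeb orbits. Standard SFT gluing of the building $v_{1,\pm}\cup v_{0,\pm}$ produces curves carrying the positive asymptotics of the \emph{top} level---that is, curves asymptotic to $e_{g+1}$, which land back in $\mathcal{N}_\pm$, not in $I$. What you describe---letting the cylinder $v_{1,\pm}$ escape to $+\infty$ while $v_{0,\pm}$ settles into $\widehat{W}$---is the SFT-convergence of the $\mathcal{N}_\pm$-family toward the building, not a gluing into a different moduli space. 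Consequently your $\mathcal{S}$ is not a smooth one-parameter family of maps from a fixed punctured sphere with fixed asymptotics: the assignment $t\mapsto u_t$ suffers a genuine discontinuity in asymptotic data at each of the two transition values. This undermines both the embedding claim of Lemma~\ref{lemma:embedded-handlebody} (which needs $\iota(t,x)=u_t(x)$ to be smooth) and your disjointness argument for property~(\ref{property:disjoint-images}) (curves with different positive ends are not homologous relative cycles in a common moduli space, so the intersection-number invariance you invoke does not apply across the transition). The paper sidesteps all of this by keeping the whole family in one moduli space and paying for it with a small isotopy of $\Sigma_g$.
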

\begin{proof}
Consider the interval $I$ given by Lemma \ref{lemma:middle-interval}.  We take this interval to be the ``middle part" of $\mathcal{S}$ and for $t\gg 0$ we take $u_{\pm t}$ to be $v_{0,\pm}$, translated by $t+c$ in the symplectization end $[0,\infty)\times M$, where $c$ is some constant.  Property (\ref{property:symplectization-part}) follows immediately.  Because $v_{0,\pm}$ is positively asymptotic to $h^1_{g+1}$ and not $e_{g+1}$, we must isotope $\Sigma_g$ to ensure that $R_{\pm}(\Sigma_g)=\im(\pi\circ v_{0,\pm})$ and thus satisfy property (\ref{property:projections}).  Finally, notice that if $t_1\neq t_2$ are large then the images of $u_{t_1}$ and $u_{t_2}$ are disjoint; the positivity of intersections and the homotopy invariance of the intersection number tells us that in fact $u_{t_1}$ and $u_{t_2}$ are disjoint for any $t_1\neq t_2$.
\end{proof}

\begin{lemma}\label{lemma:embedded-handlebody}
The map $\iota\colon\mathbb{R}\times (S^2\setminus\{p_1,\ldots,p_{g+1}\})\to \widehat{W}$ defined by
\[
\iota(t,x) := u_t(x),
\]
with $u_t$ as identified in Lemma \ref{lemma:1-parameter-family}, is an embedding of a genus-$g$ handlebody into $\widehat{W}$.
\end{lemma}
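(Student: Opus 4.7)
The plan is to verify in turn that $\iota$ is smooth, injective, an immersion, and proper, and then to identify the abstract domain as the interior of a genus-$g$ handlebody. Smoothness follows from the smooth dependence of $u_t$ on $t$ in the interval $I$ of Lemma \ref{lemma:middle-interval} (the moduli space $\mathcal{M}_{\widehat{W}}(e_1,\ldots,e_g,h^1_{g+1})$ is a smooth $1$-manifold by Fredholm regularity), together with the fact that on the symplectization ends $|t|\gg 0$ the curve $u_{\pm t}$ is simply the translate of $v_{0,\pm}$ by $t+c$. Injectivity is immediate from Lemma \ref{lemma:1-parameter-family}: each $u_t$ is an embedding, and for $t_1\neq t_2$ the images of $u_{t_1}$ and $u_{t_2}$ are disjoint by property (\ref{property:disjoint-images}). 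Properness is equally straightforward: if $(t_n,x_n)$ leaves every compact subset of the domain, then either $|t_n|\to\infty$ (in which case the translates of $v_{0,\pm}$ push $\iota(t_n,x_n)$ to infinity in the positive end of $\widehat{W}$) or $x_n\to p_i$ for some $i$ (in which case $u_{t_n}(x_n)$ asymptotes to the Reeb orbit $e_i$ at infinity in the symplectization).

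The main obstacle is the immersion property. Since $u_{t_0}$ is itself an embedding, $d\iota_{(t_0,x_0)}$ automatically has rank at least $2$, and what requires proof is that the infinitesimal variation $\eta:=\partial_t u_t|_{t=t_0}$ is nowhere tangent to $u_{t_0}$. I plan to obtain this by adapting the argument used for the genus-$1$ case in \cite{menke2018jsj}. After a harmless reparametrization (which does not alter the image of $\iota$) one may arrange that $\eta$ is a section of the normal bundle of $u_{t_0}$; it then satisfies a linear Cauchy--Riemann-type equation, so that its zeros are isolated and count positively in the relative Chern number of the normal bundle. A zero of $\eta$ at some $x_0$ would, via positivity of intersections for $J$-holomorphic curves in the $4$-manifold $\widehat{W}$, force an isolated positively-counted intersection of $u_{t_0}$ with $u_{t_0+\epsilon}$ for small $\epsilon\neq 0$, contradicting property (\ref{property:disjoint-images}) of Lemma \ref{lemma:1-parameter-family}. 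Hence $\eta$ is nowhere tangent to $u_{t_0}$, and $d\iota$ has full rank everywhere.

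Combining these four ingredients, $\iota$ is a smooth proper injective immersion, hence an embedding onto a $3$-dimensional submanifold of $\widehat{W}$. To identify the abstract domain, let $P$ denote the compact planar surface obtained by removing $g+1$ disjoint open disks from $S^2$. Then $[-1,1]\times P$, with corners smoothed, is a compact $3$-manifold whose boundary is the double of $P$ along $\partial P$, a closed oriented surface of Euler characteristic $2\chi(P)=2(2-(g+1))=2-2g$, i.e., of genus $g$. This is the standard presentation of the genus-$g$ handlebody, and its interior is diffeomorphic to $\mathbb{R}\times(S^2\setminus\{p_1,\ldots,p_{g+1}\})$ --- precisely the domain of $\iota$. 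Thus $\iota$ embeds (the interior of) a genus-$g$ handlebody into $\widehat{W}$, as claimed.
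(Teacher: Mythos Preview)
Your proof is correct and covers the same ground as the paper's, but you handle the central immersion step differently. The paper establishes that the normal variation $\eta$ is nowhere vanishing by an index computation: using Wendl's formula $2c_1(u_t^*T\widehat{W})=\ind(u_t)+\chi-\mu_{CZ}(u_t)$ it finds $c_1(u_t^*T\widehat{W})=1-g$, hence $c_1(N_{u_t})=c_1(u_t^*T\widehat{W})-\chi=0$, and then (implicitly) invokes positivity of zeros for CR-type sections to conclude that $\eta$ has no zeros. You instead bypass the Chern-number computation entirely and argue geometrically: a zero of $\eta$ would produce a local positive intersection between $u_{t_0}$ and $u_{t_0+\epsilon}$, which is already ruled out by property~(\ref{property:disjoint-images}) of Lemma~\ref{lemma:1-parameter-family}. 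Both arguments rest on the same underlying fact (zeros of a CR-type normal section count positively and govern intersections of nearby leaves), so they are really two sides of the same coin; your route is slightly more economical in that it feeds the disjointness conclusion of the previous lemma back in rather than recomputing an index, while the paper's route makes the numerical invariants explicit. You also spell out smoothness, injectivity, properness, and the identification of the domain with a genus-$g$ handlebody, all of which the paper leaves implicit.
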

\begin{proof}
For an arbitrary $t\in\mathbb{R}$ the curve $u_{t}$ is an embedding and thus each curve $u_{t'}$, for $t'$ near $t$, can be thought of as a section of the normal bundle $N_{u_{t}}$.  We can compute the first Chern number of this bundle according to
\[
c_1(N_{u_{t}}) = c_1(u_{t}^*T\widehat{W}) - \chi(S^2\setminus\{p_1,\ldots,p_{g+1}\}) = c_1(u_t^*T\widehat{W}) + g-1,
\]
but first we must compute $c_1(u_{t}^*T\widehat{W})$.  For this we appeal to \cite[Equation 1.1]{wendl2008automatic}, which says that
\[
2c_1(u_{t}^*T\widehat{W}) = \ind(u_{t}) + \chi(S^2\setminus\{p_1,\ldots,p_{g+1}\}) - \mu_{CZ}(u_{t}),
\]
where the last term is a signed count of the Conley-Zehnder indices of the orbits to which $u_{t}$ is asymptotic.  Then
\[
2c_1(u_{t}^*T\widehat{W}) = 1 + (1-g) - g = 2-2g,
\]
so $c_1(u_{t}) = 1-g$ and it follows that $c_1(N_{u_t})=0$.  So sections of $N_{u_{t}}$ are zero-free, meaning that $\iota$ is an embedding.
\end{proof}

The stage is now set for the construction of $(W',\omega')$, the symplectic manifold promised by Theorem \ref{main-theorem}.  This construction proceeds exactly as in \cite{menke2018jsj}, with small changes to the statements of the lemmas found there.  The strategy is to remove from $W$ the handlebody $H\subset\widehat{W}$ embedded by $\iota$ in Lemma \ref{lemma:embedded-handlebody}.  This is done in stages.  First $W$ is enlarged to $W_R:=W\cup([0,R]\times M)$, with $R$ chosen large enough that the projection of $u_{\pm t}$ to $[0,R]\times M$ is $R_{\pm}(\Sigma_g)$ minus a small collar neighborhood whenever $t\gg 0$.  From $W_R$ we remove $\tilde{N}(\Gamma_{\Sigma_g})$, a small tubular neighborhood of $\{R\}\times\Gamma_{\Sigma_g}$, leaving us with $W'_R:=W_R-\tilde{N}(\Gamma_{\Sigma_g})$.  This allows us to decompose $\partial W'_R$ into its horizontal part
\begin{equation}\label{eq:horizontal-part}
\partial_hW'_R = \partial W'_R - \partial W_R \simeq \bigsqcup_{i=1}^{g+1} (S^1\times D^2)
\end{equation}
and its vertical part $\partial_vW'_R=\partial W'_R-\partial_hW'_R$, not unlike the boundary of a Lefschetz fibration over a Weinstein domain.  Note that the deletion of $\tilde{N}(\Gamma_{\Sigma_g})$ from $W_R$ removes small collar neighborhoods from $\{R\}\times R_{\pm}$, leaving us with $\{R\}\times R'_{\pm}$.  We now begin modifying $H$ in preparation for its removal from $W'_R$.

\begin{lemma}[{\cite[Lemma 3.10]{menke2018jsj}}]\label{lemma:liouville-embedding}
There exists an embedding $\Sigma_L\times[-T,T]\hookrightarrow W'_R$ so that
\begin{enumerate}
	\item $\Sigma_L$ is a compact surface with genus 0 and $g+1$ boundary components;
	\item $\Sigma_L\times\{\pm T\}=\{R\}\times R'_{\pm}$;
	\item using the identification given in equation (\ref{eq:horizontal-part}) we have\label{item:meridians}
	\[
	\partial\Sigma_L\times\{t\} = \bigsqcup_{i=1}^{g+1} (S^1\times\gamma(t)) \subset \partial_hW'_R
	\]
	for $t\in[-T,T]$, where $\gamma(t)$ is the straight arc from $(-1,0)$ to $(1,0)$ in $D^2$.
\end{enumerate}
\end{lemma}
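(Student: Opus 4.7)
The plan is to construct the embedding directly from the $1$-parameter family $\mathcal{S}$ and the handlebody embedding $\iota$ of Lemmas \ref{lemma:1-parameter-family} and \ref{lemma:embedded-handlebody}. Let $\Sigma_L$ be the compact planar surface of genus $0$ with $g+1$ boundary components obtained from $S^2$ by deleting small open disks around the punctures $p_1,\ldots,p_{g+1}$, and take $T>T_0$, where $T_0$ is the threshold past which $u_{\pm t}$ is a vertical translation of $v_{0,\pm}$.

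First I would choose the radii of the punctured disks together with $R$ and $T$ coherently so that for every $t\in[-T,T]$ the restriction $u_t|_{\Sigma_L}$ lands in $W'_R$. Concretely, the punctured disks should absorb exactly the portions of each $u_t$ lying above level $R$ --- that is, the asymptotic cylindrical ends --- while the rest of $u_t|_{\Sigma_L}$ lies in $W\cup([0,R]\times M)$. This is possible thanks to the uniform exponential convergence of holomorphic curves to non-degenerate Reeb orbits together with the fact that the family is explicitly a translation for $|t|\geq T_0$. Lemma \ref{lemma:embedded-handlebody} then guarantees that $\Phi_0(x,t):=u_t(x)$ is a smooth embedding of $\Sigma_L\times[-T,T]$ into $W'_R$.

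Next I would modify $\Phi_0$ by a small isotopy supported in thin collars of $\Sigma_L\times\{\pm T\}$ to produce an embedding $\Phi$ sending $\Sigma_L\times\{\pm T\}$ onto $\{R\}\times R'_{\pm}$. Such an isotopy exists because $\pi\circ u_{\pm T}=R_{\pm}(\Sigma_g)$ after the isotopy of $\Sigma_g$ carried out in Lemma \ref{lemma:1-parameter-family}, and the curves $u_{\pm T}$ are $C^\infty$-close to horizontal slices at level $R$ when $T$ is chosen large relative to the symplectization extent of the compact part of $v_{0,\pm}$. With this adjustment, conditions (1) and (2) of the lemma hold by construction.

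The main obstacle is condition (\ref{item:meridians}), the meridional and straight-arc structure on $\partial\Sigma_L\times\{t\}$. Under $u_t$, the boundary circle $\partial_i\Sigma_L$ near the $i$-th puncture maps to a small loop on $\partial\tilde{N}(\{R\}\times e_i)$ (or $\partial\tilde{N}(\{R\}\times h^1_{g+1})$ when $i=g+1$). Asymptotic analysis of holomorphic half-cylinders near non-degenerate Reeb orbits shows this loop is a meridian: it wraps once around the $S^1$ factor of the solid-torus component $S^1\times D^2\subset\partial_hW'_R$ and sits at a single point $\gamma_i(t)\in D^2$. As $t$ varies from $-T$ to $T$, the point $\gamma_i(t)$ traces an embedded arc in $D^2$ whose endpoints are fixed by the boundary behavior of $\Phi$ at $t=\pm T$; embeddedness is a consequence of the disjointness property (\ref{property:disjoint-images}) of Lemma \ref{lemma:1-parameter-family}. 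A final diffeomorphism of each $D^2$ factor, compatible with the product identification of $\partial_hW'_R$, straightens each $\gamma_i$ to the arc from $(-1,0)$ to $(1,0)$. The hardest piece will be the asymptotic analysis needed to identify the meridional character of these boundary loops and uniformize their parameterizations across the $g+1$ components of $\Gamma_{\Sigma_g}$.
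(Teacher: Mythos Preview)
The paper does not prove this lemma itself but defers entirely to \cite[Lemma 3.10]{menke2018jsj}, so there is no in-text argument to compare against. Your strategy---building the embedding out of the family $\mathcal{S}$ via the handlebody map $\iota$ of Lemma~\ref{lemma:embedded-handlebody} and then adjusting near the boundary---is the natural one and is what underlies the cited construction.

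There is one gap worth flagging. You propose a fixed truncation $\Sigma_L\subset S^2\setminus\{p_i\}$ whose removed disks ``absorb exactly the portions of each $u_t$ lying above level $R$,'' and you then assert that $u_t(\partial_i\Sigma_L)$ lies on $\partial\tilde N(\{R\}\times e_i)\subset\partial_hW'_R$. But the preimage $u_t^{-1}([R,\infty)\times M)$ genuinely depends on $t$: for $|t|>T_0$ the curve $u_t$ is the translate of $v_{0,\pm}$ by $t+c$, so this preimage shrinks monotonically as $|t|$ grows, and no single $\Sigma_L$ can make $u_t(\partial\Sigma_L)$ land on the fixed hypersurface $\partial_hW'_R$ for every $t\in[-T,T]$. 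Likewise $u_{\pm T}(\Sigma_L)$ sits near height $T+c$ rather than $R$, so the isotopy needed for condition (2) is not small when $R\gg T$. The cleaner formulation is to take the intersection $H\cap W'_R$, where $H=\iota\bigl(\mathbb{R}\times(S^2\setminus\{p_i\})\bigr)$ is the open handlebody of Lemma~\ref{lemma:embedded-handlebody}, and argue directly that this $3$-manifold with corners is a product $\Sigma_L\times[-T,T]$ with the required boundary pattern; the product parametrization is then \emph{not} the restriction of $\iota$ to a fixed rectangle in the domain. Your asymptotic analysis near the Reeb orbits and your use of the freedom in the identification (\ref{eq:horizontal-part}) to straighten the arcs $\gamma_i$ are exactly the right ingredients for that step.
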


We denote the embedded copy of $\Sigma_L\times[-T,T]$ by $H'\subset W'_R$ and endow it with the obvious coordinates $(x,t)$.  The following two results are proven in \cite{menke2018jsj} and allow us to cut $W'_R$ along $H'$ to obtain a symplectic manifold $(W',\omega')$ that strongly fills its boundary.

\begin{lemma}[{\cite[Lemma 3.11]{menke2018jsj}}]
Let $B=[-T,T]\times[-\epsilon,\epsilon]$ with coordinates $(t,w)$.  After slight adjustments of $H'$ and $W'_R$, there exists a neighborhood $N(H')\simeq H'\times[-\epsilon,\epsilon]\subset W'_R$ and a 1-form $\lambda = \lambda_B+\lambda_{\Sigma_L}$ on $N(H')$ such that
\begin{enumerate}
	\item $\Sigma_L\times\{\pm T\}\times[-\epsilon,\epsilon]\subset\partial_vW'_R$ and $(\partial\Sigma_L)\times B\subset\partial_hW'_R$;
	\item $\lambda_{\Sigma_L}$ is the Liouville form for $R'_{\pm}$;
	\item $\lambda_B=t~dw$;
	\item $d\lambda$ is the symplectic form on $W'_R$;
	\item $\lambda$ agrees with the Liouville form on $W'_R$ near $\partial W'_R$.
\end{enumerate}
\end{lemma}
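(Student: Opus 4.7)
The plan is to first set up coordinates on a tubular neighborhood of $H'$ using the 1-parameter family $\{u_t\}$, then define a model 1-form in these coordinates, and finally apply a Moser-type deformation to identify the ambient symplectic form with our model. I would begin by using the embedding $\iota$ from Lemma \ref{lemma:embedded-handlebody} to obtain coordinates $(x,t)$ on $H'$, where $t\in[-T,T]$ parametrizes the family and $x$ gives coordinates on the leaf $\Sigma_L$. Since Lemma \ref{lemma:embedded-handlebody} shows that the normal bundle $N_{u_t}$ has vanishing first Chern number and admits a nowhere-zero section (given by nearby leaves $u_{t'}$), the normal bundle of $H'$ in $\widehat{W}$ is trivial, so I can extend to coordinates $(x,t,w)$ on a tubular neighborhood $N(H')\simeq H'\times[-\epsilon,\epsilon]$.

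Second, I would construct the model 1-form. On each leaf $\Sigma_L\times\{(t,w)\}$, the Liouville form $\lambda_{\Sigma_L}$ is furnished by the Weinstein structure on $R'_{\pm}$ used in Lemma \ref{lemma:weinstein-domains}, transported across the family in a $(t,w)$-independent manner via the parametrization of $\{u_t\}$. In the transverse direction, set $\lambda_B=t\,dw$ on $B=[-T,T]\times[-\epsilon,\epsilon]$. The resulting sum $\lambda=\lambda_B+\lambda_{\Sigma_L}$ has differential $d\lambda=dt\wedge dw+d\lambda_{\Sigma_L}$, which is manifestly symplectic on $N(H')$ since $d\lambda_{\Sigma_L}$ is a symplectic form on each leaf.

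Third, I would match $d\lambda$ with the restriction of $\omega$. On $H'$ itself, both forms restrict to $d\lambda_{\Sigma_L}$ on each leaf and agree in the $t$-direction because each $u_t$ is $J$-holomorphic and hence symplectic for $\widehat{\omega}$. Since $d\lambda$ and $\omega$ are two symplectic forms on $N(H')$ that agree on the zero section $H'$, the convex combination $\omega_s=(1-s)\omega+s\,d\lambda$ remains nondegenerate after possibly shrinking $\epsilon$. The difference $\omega-d\lambda$ is closed and vanishes on $H'$, hence is exact on a neighborhood with a primitive vanishing on $H'$; Moser's trick then produces a diffeomorphism of $N(H')$ fixing $H'$ pointwise that pulls $\omega$ back to $d\lambda$, after which we may assume $d\lambda$ equals the ambient symplectic form.

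The main obstacle I anticipate is arranging condition (5), the compatibility of $\lambda$ with the ambient Liouville form near $\partial W'_R$. The ambient Liouville form is already prescribed on a collar of $\partial W_R$ and on a neighborhood of the modified collar produced when $\tilde{N}(\Gamma_{\Sigma_g})$ was excised, and our model form $\lambda_B+\lambda_{\Sigma_L}$ must coincide with it along the two pieces $\Sigma_L\times\{\pm T\}\times[-\epsilon,\epsilon]\subset\partial_vW'_R$ and $(\partial\Sigma_L)\times B\subset\partial_hW'_R$. This is where the phrase ``slight adjustments of $H'$ and $W'_R$'' does its work: I would arrange that near $\Sigma_L\times\{\pm T\}$ the coordinate $w$ agrees with a flow parameter of the ambient Liouville vector field, so that $t\,dw$ models the $r\,d\theta$-type contribution coming from the normal directions, and that near $(\partial\Sigma_L)\times B$ the form $\lambda_{\Sigma_L}$ interpolates to match the ambient primitive on the solid tori of equation (\ref{eq:horizontal-part}). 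The remaining subtlety is the behavior at the corners $(\partial\Sigma_L)\times\{\pm T\}\times[-\epsilon,\epsilon]$, which I would handle by a localized cutoff and edge-rounding analogous to the construction of standard neighborhoods in Section \ref{subsec:liouville}; the Moser argument from the previous paragraph is then carried out relative to these already-matched boundary pieces.
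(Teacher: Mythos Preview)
The paper does not supply its own proof of this lemma; it simply cites \cite[Lemma~3.11]{menke2018jsj} and states that the argument there is genus-independent.  So there is no in-paper proof to compare your proposal against.

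Your outline is the standard one and is broadly correct: trivialize the normal bundle of $H'$, write down the split model form $\lambda_B+\lambda_{\Sigma_L}$, and run a relative Moser argument.  One step that deserves more care is your claim that $\omega$ and $d\lambda$ already agree on $H'$.  The fact that each $u_t$ is $J$-holomorphic tells you only that $\omega|_{T u_t}$ is \emph{some} positive area form, not that it equals the fixed form $d\lambda_{\Sigma_L}$ you have transported across the family; and the vanishing of the mixed $\omega(\partial_x,\partial_t)$ terms requires that your $t$-direction actually lie in the symplectic normal to each leaf, which is an additional choice (automatic only if $J$ is $\omega$-compatible and you pick the $J$-complex normal).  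Both issues are fixable---reparametrize the leaves so the leafwise area forms match, and choose the normal coordinate along the symplectic complement---but they are exactly the content of the ``slight adjustments of $H'$'' in the statement, so you should say so rather than fold them into the Moser step.

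Your identification of condition~(5) as the real work is accurate, and your plan (align $w$ with the ambient Liouville flow near $\partial_vW'_R$, interpolate $\lambda_{\Sigma_L}$ near $\partial_hW'_R$, handle corners by edge-rounding, then run Moser rel boundary) is what one expects the cited proof to do.
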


\begin{lemma}[{\cite[Lemma 3.12]{menke2018jsj}}]\label{lemma:strong-filling}
There exists a modification
\[
\lambda'=\lambda+d(tw) = 2t~dw + w~dt + \lambda_{\Sigma_L},
\]
whose Liouville vector field $Z'=2t\partial_t-w\partial_w+Z_{\Sigma_L}$ points into $N(H')$ along $w=\pm\epsilon$.
\end{lemma}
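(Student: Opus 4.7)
The plan is to prove the lemma by a direct computation, verifying in three steps that the one-form $\lambda'$ is still a primitive for the existing symplectic form, that the proposed $Z'$ is indeed its Liouville vector field, and that $Z'$ satisfies the asserted boundary behavior.

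The first step is immediate: since $\lambda' - \lambda = d(tw)$ is exact, we have $d\lambda' = d\lambda$, which by the previous lemma is the symplectic form on $N(H')$. Expanding the right-hand side, $\lambda' = t\,dw + (t\,dw + w\,dt) + \lambda_{\Sigma_L} = 2t\,dw + w\,dt + \lambda_{\Sigma_L}$, matching the formula in the statement.

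For the second step, the plan is to exploit the product structure $N(H')\simeq B\times\Sigma_L$ and the corresponding splitting $d\lambda = dt\wedge dw + d\lambda_{\Sigma_L}$. Contracting the candidate $Z' = 2t\partial_t - w\partial_w + Z_{\Sigma_L}$ into $d\lambda$, the summands behave as follows: $\iota_{2t\partial_t}(dt\wedge dw) = 2t\,dw$, $\iota_{-w\partial_w}(dt\wedge dw) = w\,dt$, and $\iota_{Z_{\Sigma_L}}d\lambda_{\Sigma_L} = \lambda_{\Sigma_L}$ by the defining property of the Liouville vector field of the Liouville domain $(\Sigma_L,\lambda_{\Sigma_L})$. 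The cross-contractions $\iota_{Z_{\Sigma_L}}(dt\wedge dw)$ and $\iota_{2t\partial_t - w\partial_w}\,d\lambda_{\Sigma_L}$ vanish because $Z_{\Sigma_L}$ is tangent to $\Sigma_L$ and $d\lambda_{\Sigma_L}$ involves only $\Sigma_L$-directions. Summing, $\iota_{Z'}d\lambda = \lambda'$, identifying $Z'$ as the Liouville vector field of $\lambda'$.

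For the third step, note that the $\partial_w$-component of $Z'$ is $-w$, which is nonzero and of the opposite sign to $w$ whenever $w\neq 0$. In particular, along $\{w=+\epsilon\}$, whose outward normal is $+\partial_w$, the vector $Z'$ has $\partial_w$-coefficient $-\epsilon<0$, so $Z'$ points inward; along $\{w=-\epsilon\}$, whose outward normal is $-\partial_w$, the coefficient is $+\epsilon$, again inward. Thus $Z'$ points into $N(H')$ along both faces $w=\pm\epsilon$, as claimed.

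There is no substantive obstacle in this argument: all of the work was front-loaded into the previous lemma, which produced the normal-form splitting $\lambda = t\,dw + \lambda_{\Sigma_L}$ with a Liouville field tangent to $\{w=\pm\epsilon\}$. The present lemma merely tilts this field into the handlebody by a symplectically invisible exact correction $d(tw)$. The only subtlety to keep in mind is that this modification disagrees with the original primitive on $N(H')$ and so must eventually be interpolated back to $\lambda$ near the horizontal boundary $\partial_hW'_R$; that interpolation is part of the subsequent corner-rounding construction, and does not affect the purely local assertion proved here.
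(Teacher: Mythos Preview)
Your proof is correct; the paper itself does not prove this lemma but simply imports it from \cite[Lemma 3.12]{menke2018jsj}, so there is no argument in the present paper to compare against. Your direct verification---checking that $d\lambda'=d\lambda$, that $\iota_{Z'}d\lambda=\lambda'$ via the product splitting, and that the $\partial_w$-component $-w$ of $Z'$ points inward along $w=\pm\epsilon$---is exactly the natural computation one would carry out, and your closing remark about the later interpolation is an accurate reading of the surrounding construction.
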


At last we define $W':=W'_R-N(H')$ and $\omega':=d\lambda'$ and from Lemma \ref{lemma:strong-filling} we conclude that $(W',\omega')$ strongly fills its boundary.  In case our original symplectic filling was exact we ask the same of $(W',\omega')$.  Once again we may appeal to \cite{menke2018jsj}, where the proof of the following lemma is genus-independent.


\begin{lemma}[{\cite[Lemma 3.13]{menke2018jsj}}]
If $(W,\omega=d\beta)$ is an exact filling, then there exists a 1-parameter family of Liouville forms $\beta_\tau$, $\tau\in[0,1]$, on $W'_R$ such that $\beta_0=\beta$ and $\beta_1=\lambda'$ on $N(H')\cap\{-\epsilon/2\leq w\leq \epsilon/2\}$.
\end{lemma}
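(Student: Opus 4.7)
My plan is to construct the family in the form $\beta_\tau = \beta + \tau\,d(\phi g)$, where $g\colon N(H') \to \mathbb{R}$ is a smooth primitive of $\lambda' - \beta$ and $\phi\colon W'_R \to [0,1]$ is a cutoff function depending only on the $w$-coordinate, equal to $1$ for $|w|\leq \epsilon/2$ and vanishing near $|w|=\epsilon$. Extending $\phi$ by $0$ outside $N(H')$ makes $d(\phi g)$ a globally defined smooth $1$-form on $W'_R$, and since $d\bigl(\tau\,d(\phi g)\bigr)=0$ every $\beta_\tau$ satisfies $d\beta_\tau = d\beta = \omega$; hence each $\beta_\tau$ is a Liouville form for the fixed symplectic structure. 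The boundary conditions are then automatic: $\beta_0 = \beta$, and on $N(H')\cap\{|w|\leq \epsilon/2\}$ the cutoff $\phi$ equals $1$, giving $\beta_1 = \beta + dg = \lambda'$.

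The substantive content is therefore producing the primitive $g$, i.e., verifying that the closed $1$-form $\lambda' - \beta$ on $N(H')$ is exact. Closedness is immediate because both $\beta$ and $\lambda'$ are primitives of $\omega$ on $N(H')$. The potential obstruction sits in $H^1(N(H'))$, which—since $N(H')$ deformation retracts onto the planar surface $\Sigma_L$ of genus $0$ with $g+1$ boundary components—is isomorphic to $\mathbb{Z}^g$, generated by classes dual to loops encircling individual boundary components of $\Sigma_L$. I expect this cohomological obstruction to be the main difficulty, and the tool for killing it is item (5) of Lemma 3.11: since $\lambda = \beta$ near $\partial W'_R$, one has $\lambda' - \beta = d(tw)$ on a neighborhood of the horizontal portion $(\partial \Sigma_L)\times B$ of $\partial N(H')$. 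Hence $\lambda' - \beta$ is already exact near each boundary component of $\Sigma_L$, and any loop representing a generator of $H_1(N(H'))$ can be homotoped into such a neighborhood, where its period against an exact form vanishes. Therefore $[\lambda' - \beta] = 0 \in H^1(N(H'))$ and a smooth primitive $g$ exists on all of $N(H')$.

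Once $g$ is in hand, the verifications outlined in the first paragraph complete the argument. This reasoning is genus-independent: it uses only that $N(H')$ retracts onto a planar surface whose first homology is generated by boundary loops, and that $\lambda' - \beta$ is exact in a neighborhood of each boundary component. These properties hold uniformly in $g$, matching the authors' statement that this lemma survives from \cite{menke2018jsj} without modification.
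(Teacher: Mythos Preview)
Your argument is correct and is the natural approach. The paper itself does not supply a proof of this lemma; it simply cites \cite[Lemma~3.13]{menke2018jsj} and remarks that the argument there is genus-independent, so there is no in-paper proof to compare against directly. What you have written is exactly the standard argument one expects for that cited result: interpolate linearly between the two primitives via $\beta_\tau=\beta+\tau\,d(\phi g)$, using a cutoff in the $w$-direction so that the exact correction extends by zero across the only part of $\partial N(H')$ lying in the interior of $W'_R$ (the faces $w=\pm\epsilon$). Your identification of the cohomological obstruction in $H^1(N(H'))\cong H^1(\Sigma_L)\cong\mathbb{Z}^g$ and its elimination via item~(5) of Lemma~3.11 (so that $\lambda'-\beta=d(tw)$ near $(\partial\Sigma_L)\times B\subset\partial W'_R$, where the generating loops of $H_1(\Sigma_L)$ can be pushed) is precisely the point that makes the argument go through uniformly in $g$, in agreement with the authors' remark.
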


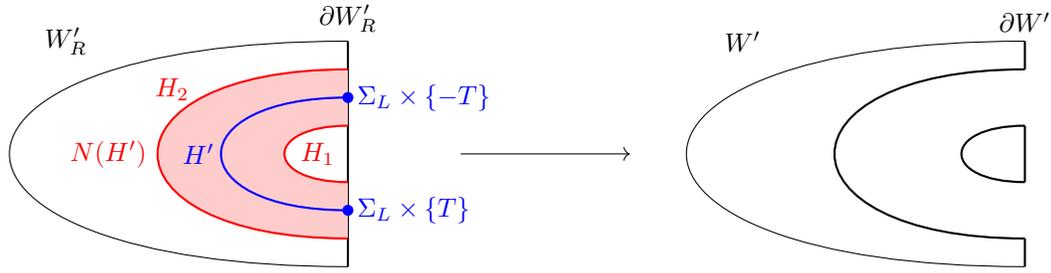
\begin{figure}
\centering
\begin{tikzpicture}[scale=0.75]
\draw (0,0) .. controls (-8,0) and (-8,4) .. (0,4);
\draw [thick] (0,0) -- (0,4);

\fill [red!20!white] (0,0.5) to (0,1.5) .. controls (-1.5,1.5) and (-1.5,2.5) .. (0,2.5) to (0,3.5) .. controls (-4.5,3.5) and (-4.5,0.5) .. (0,0.5);

\draw [blue,thick] (0,1) .. controls (-3,1) and (-3,3) .. (0,3);
\draw [red,thick] (0,0.5) .. controls (-4.5,0.5) and (-4.5,3.5) .. (0,3.5);
\draw [red,thick] (0,1.5) .. controls (-1.5,1.5) and (-1.5,2.5) .. (0,2.5);

\filldraw [blue] (0,1) circle (2.5pt);
\filldraw [blue] (0,3) circle (2.5pt);

\node at (-5,4) {$W'_R$};
\node [above] at (0,4) {$\partial W'_R$};
\node [left,red] at (-3.4,2) {$N(H')$};
\node [left,blue] at (-2.2,2) {$H'$};
\node [right,red] at (-1.0,2) {$H_1$};
\node [left,red] at (-2.65,3.15) {$H_2$};
\node [right,blue] at (0,1) {$\Sigma_L\times\{T\}$};
\node [right,blue] at (0,3) {$\Sigma_L\times\{-T\}$};

\draw [->] (2,2) -- (5,2);

\draw (12,0) .. controls (4,0) and (4,4) .. (12,4);
\draw [thick] (12,0) -- (12,0.5);
\draw [thick] (12,1.5) -- (12,2.5);
\draw [thick] (12,3.5) -- (12,4);

\draw [thick] (12,0.5) .. controls (7.5,0.5) and (7.5,3.5) .. (12,3.5);
\draw [thick] (12,1.5) .. controls (10.5,1.5) and (10.5,2.5) .. (12,2.5);

\node at (7,4) {$W'$};
\node [above] at (12,4) {$\partial W'$};
\node at (14,2) {}; 
\end{tikzpicture}
\caption{The removal of $N(H')$ from $W'_R$.  On the right, $\partial W'$ has two connected components.}
\label{fig:boundary-intuition}
\end{figure}

Let us give an informal summary of the relationship between $\partial W'$ and $M$.  The first step in constructing $W'$ was to consider $W_R$, whose boundary is contactomorphic to $M$.  From $W_R$ we deleted a neighborhood of the dividing set of $\Sigma_g$.  This provided a decomposition of $\partial W'_R$ into its horizontal and vertical parts, but the overall effect on $\partial W_R$ was trivial.  The last step in our construction --- deleting $N(H')$ from $W_R'$ --- made the most substantive changes to the boundary.  We first identified $H'$, a handlebody in $W_R'$ which picked out for us two copies of $\Sigma_L$ in $\partial W'_R$.  Namely, $H'$ distinguished the Liouville hypersurfaces $\Sigma_L\times\{\pm T\}=\{R\}\times R'_{\pm}$.  Then $N(H')$ is a neighborhood of $H'$, part of whose boundary lies in $\partial W'_R$.  The part of $\partial N(H')$ lying in the interior of $W'_R$ consists of two disjoint copies of $H'$, and the part lying in $\partial W'_R$ includes $\Sigma_L\times\{\pm T\}$.  So deleting $N(H')$ from $W'_R$ cuts $\partial W'_R$ open along the Liouville hypersurfaces $\Sigma_L\times\{\pm T\}$ and glues in two handlebodies modeled on $H'$.  This process is depicted in Figure \ref{fig:boundary-intuition}.\\

All that remains is to use symplectic handle attachment to recover $W$ from $W'$.  To this end we observe that the neighborhood $(N(H'),d\lambda')$ we have removed from $W'_R$ is precisely the abstract symplectic handle $(H_{\Sigma_L},\omega_{\lambda_{\Sigma_L}})$ constructed from the Liouville domain $(\Sigma_L,\lambda_{\Sigma_L})$.  That is, we have obtained $W'$ from $W$ by removing a symplectic handle, and thus may recover $W$ by reattaching said handle as described in Section \ref{sec:background}.

\bibliographystyle{alpha}
\bibliography{../../references}

\begin{thebibliography}{Wen10b}

\bibitem[Avd12]{avdek2012liouville}
Russell Avdek.
\newblock Liouville hypersurfaces and connect sum cobordisms.
\newblock {\em arXiv preprint arXiv:1204.3145}, 2012.

\bibitem[EG91]{eliashberg1991convex}
Yakov Eliashberg and Mikhael Gromov.
\newblock Convex symplectic manifolds.
\newblock {\em Proc. {S}ympos. {P}ure {M}ath}, 52.2:135--162, 1991.

\bibitem[EH02]{etnyre2002tight}
John~B Etnyre and Ko~Honda.
\newblock Tight contact structures with no symplectic fillings.
\newblock {\em Invent. {M}ath.}, 148(3):609--626, 2002.

\bibitem[Eli90]{eliashberg1990filling}
Yakov Eliashberg.
\newblock Filling by holomorphic discs and its applications.
\newblock {\em London {M}ath. {S}oc. {L}ecture {N}ote {S}er., Geometry of
  {L}ow-{D}imensional {M}anifolds}, 2(151):45--67, 1990.

\bibitem[Eli96]{eliashberg1996unique}
Y.~Eliashberg.
\newblock Unique holomorphically fillable contact structure on the 3-torus.
\newblock {\em Int. {M}ath. {R}es. {N}ot. {IMRN}}, 1996(2):77--82, 1996.

\bibitem[Ghi05]{ghiggini2005strongly}
Paolo Ghiggini.
\newblock Strongly fillable contact 3--manifolds without {S}tein fillings.
\newblock {\em Geom. {T}opol.}, 9(3):1677--1687, 2005.

\bibitem[Gir91]{giroux1991convexite}
Emmanuel Giroux.
\newblock Convexit{\'e} en topologie de contact.
\newblock {\em Comment. {M}ath. {H}elv.}, 66(1):637--677, 1991.

\bibitem[Gro85]{gromov1985pseudo}
Mikhael Gromov.
\newblock Pseudoholomorphic curves in symplectic manifolds.
\newblock {\em Invent. {M}ath.}, 82(2):307--347, 1985.

\bibitem[HKM05]{honda2005pinwheels}
Ko~Honda, William~H Kazez, and Gordana Mati{\'c}.
\newblock Pinwheels and bypasses.
\newblock {\em Algebr. {G}eom. {T}opol.}, 5(2):769--784, 2005.

\bibitem[Lis08]{lisca2008symplectic}
Paolo Lisca.
\newblock On symplectic fillings of lens spaces.
\newblock {\em Trans. {A}mer. {M}ath. {S}oc.}, 360(2):765--799, 2008.

\bibitem[McD90]{mcduff1990structure}
Dusa McDuff.
\newblock The structure of rational and ruled symplectic 4-manifolds.
\newblock {\em J. {A}mer. {M}ath. {S}oc.}, 3(3):679--712, 1990.

\bibitem[McD91]{mcduff1991symplectic}
Dusa McDuff.
\newblock Symplectic manifolds with contact type boundaries.
\newblock {\em Invent. {M}ath.}, 103(1):651--671, 1991.

\bibitem[Men18]{menke2018jsj}
Michael Menke.
\newblock A {J}{S}{J}-type decomposition theorem for symplectic fillings.
\newblock {\em arXiv preprint arXiv:1807.03420}, 2018.

\bibitem[Ozb11]{ozbagci2011contact}
Burak Ozbagci.
\newblock Contact handle decompositions.
\newblock {\em Topology {A}ppl.}, 158(5):718--727, 2011.

\bibitem[Vau15]{vaugon2015reeb}
Anne Vaugon.
\newblock Reeb periodic orbits after a bypass attachment.
\newblock {\em Ergodic {T}heory {D}ynam. {S}ystems}, 35(2):615--672, 2015.

\bibitem[Wen10a]{wendl2008automatic}
Chris Wendl.
\newblock Automatic transversality and orbifolds of punctured holomorphic
  curves in dimension four.
\newblock {\em Comment. {M}ath. {H}elv.}, 85:347--407, 2010.

\bibitem[Wen10b]{wendl2010strongly}
Chris Wendl.
\newblock Strongly fillable contact manifolds and {$J$}-holomorphic foliations.
\newblock {\em Duke {M}ath. {J}.}, 151(3):337--384, 2010.

\end{thebibliography}
\end{document}